\theoremstyle{plain}
\newtheorem{thm}{Theorem}[section]
\newtheorem{prop}[thm]{Proposition}
\newtheorem{cor}[thm]{Corollary}
\newtheorem{rmk}[thm]{Remark}
\newtheorem{exm}[thm]{Example}
\theoremstyle{definition}
\newtheorem{defin}[thm]{Definition}
\newcommand{\K}{\mathbb{K}}
\newcommand{\p}{\mathbb{P}}
\newcommand{\B}{\mathcal{B}}
\newcommand{\depa}[2]{\frac{\partial #1}{\partial #2}}
\DeclareMathOperator{\hess}{hess}
\DeclareMathOperator{\Hess}{Hess}
\DeclareMathOperator{\ann}{Ann}
\DeclareMathOperator{\Hilb}{Hilb}
\DeclareMathOperator{\Hom}{Hom}
\title{ Lefschetz properties for Higher order Nagata idealizations}
\author{Armando Cerminara*}
\address{*Dipartimento Matematica ed Applicazioni ``Renato Caccioppoli", Universit\`a Degli Studi Di Napoli ``Federico II", Via Cinthia - Complesso Universitario Di Monte S. Angelo 80126 - Napoli - Italia}
\email{armando.cerminara@unina.it}
\author[R. Gondim]{Rodrigo Gondim**}
\address{**Universidade Federal Rural de Pernambuco, av. Don Manoel de Medeiros s/n, Dois Irmãos - Recife - PE
52171-900, Brasil}
\email{rodrigo.gondim@ufrpe.br}
\author{Giovanna Ilardi*}
\email{giovanna.ilardi@unina.it}
\author{Fulvio Maddaloni*}
\email{maddalonifulvio@gmail.com}
\subjclass[2010]{Primary 13A02, 05E40; Secondary 13D40, 13E10}
\keywords{Lefschetz properties, Artinian Gorenstein Algebras, Nagata idealization.}
\date{}
\begin{document}

\begin{abstract}
We study a generalization of Nagata idealization for level algebras. These algebras are standard graded Artinian algebras whose Macaulay dual generator is given explicitly as a bigraded polynomial of bidegree $(1,d)$. We consider the algebra associated to polynomials of the same type of bidegree $(d_1,d_2)$. We prove that the geometry of the Nagata hypersurface of order $e$ is very similar to the geometry of the original hypersurface. We study the Lefschetz properties for Nagata idealizations of order $d_1$, proving that WLP holds if $d_1 \geq d_2$. We give a complete description of the associated algebra in the monomial square free case.  
\end{abstract}

\thanks{**Partially supported  by ICTP-INdAM Research in Pairs Fellowship 2018/2019 and by FACEPE ATP - 0005-1.01/18.}

\maketitle

\section*{ Introduction}

The Lefschetz properties are algebraic abstractions inspired by the so called Hard Lefschetz Theorem about the cohomology of smooth projective varieties over the complex numbers (see \cite{La,Ru}). Since the cohomology ring of such varieties are standard graded
Artinian $\K-$algebras satisfying the Poincar\'e duality, from the algebraic viewpoint these algebras can be characterized as standard graded Artinian Gorenstein algebras, AG algebras for short (see \cite{HMMNWW, MW}). In this context, $A =\displaystyle \bigoplus_{k=0}^d A_k$, the Hard Lefschetz Theorem can be reformulated as a possible purely algebraic property of the algebra $A$.\\ 

It is important to highlight that nowadays the Lefschetz properties are considered in a number of distinct contexts, such as Khaler manifolds, solvmanifolds (see \cite{Ka}), arithmetic hyperbolic manifolds (see \cite{Be}), Shimura varieties (see \cite{HL}), convex polytopes (see \cite{KN}), Coxeter groups (see \cite{NW}), matroids, simplicial complexes \cite{St,St2,BN,GZ,KN} among others. 
In these new contexts the Lefschetz properties showed to have interactions with the algebra itself, the geometry and the combinatorics. This work lies at the intersection of these three areas. In fact, standard graded AG algebras can be presented as $A = Q/\ann(f)$ where $f \in \K[x_0,\ldots,x_n]_d$ is a homogeneous form of degree $d$ in a polynomial ring, $Q = \K[\partial_0,\ldots,\partial_n]$ is the associated ring of differential operators and $\ann(f)$ is the ideal of differential \-operators that annihilates $f$. Therefore, in this paper we are interested in the algebraic structure of $A$, together the geometry of the hypersurface $X = V(f) \subset \p^{n}$ and the combinatorics of the form $f$ in a very particular way.  

The cornerstone of the algebraic theory of Lefschetz properties were the original\- papers of Stanley \cite{St, St2, St3} and the works of Watanabe, summarized~ in \cite{HMMNWW}. A very important construction that appears many times in these works is the so called Nagata idealization also called trivial extension. In \-general Nagata idealization is a useful tool, developed by Nagata, to convert any $R-$module $M$ in a ideal of another ring, $A \ltimes M$. In our perspective the starting point is a very interesting isomorphism between the Nagata idealization of an ideal $I=(g_0,\ldots,g_m) \subset \K[u_1,\ldots,u_m]$ and a level algebra in such way that the new ring is an AG algebra and we get an explicit formula for the Macaulay generator $f$ (see \cite[Proposition 2.77]{HMMNWW})
\begin{equation}\label{eq:perazzo}
 f=x_0g_0+\ldots+x_ng_n \in \K[x_0,\ldots,x_n,u_1,\ldots,u_m]_{(1,d-1)}.
\end{equation}
This bigraded polynomial is closely related with Gordan-Noether and Perazzo constructions of forms with vanishing Hessian (see \cite{GN, Pe, CRS, GRu}). It is not a coincidence since in \cite{MW} the authors present a Hessian criterion 
for the SLP saying that the vanishing of a (higher) Hessian implies the failure of SLP. This criterion was generalized in \cite{GZ2} also for the WLP using mixed Hessians. Following the original ideas of Gordan-Noether and Perazzo, the second author in 
\cite{Go} constructed families of polynomials whose $k$-th Hessian is zero. 
A natural generalization of (\ref{eq:perazzo}) should be to consider polynomials of the form: 
\begin{equation}\label{eq:perazzo1}
 f=x_0^{d_1}g_0+\ldots+x_n^{d_1}g_n \in \K[x_0,\ldots,x_n,u_1,\ldots,u_m]_{(d_1,d_2)}.
\end{equation}
These polynomials are called Nagata polynomials of order $d_1$ (see Definition \ref{sftdefn}). 

The study of the hypersurfaces with vanishing Hessian began in 1852, when O.~Hesse wrote two papers  (see \cite{He1851, He1859}), in which he claimed they must be cones. Given an irreducible hypersurface $X=V(f)\subset \p^N$ of degree $\deg f=d\geq 3$, P. Gordan and M. Noether proved in \cite{GN} that Hesse's claim is true for $N\leq 3$, but it is false for $N\geq 4$; in fact it is possible to construct counterexamples in $\p^N$ for each $N\geq 4$ and for all $d\geq 3$. Moreover, they classified all the counterexamples to Hesse's claim in $\p^4$ (see \cite{CRS, Go, GN}). Perazzo classified cubic hypersurfaces with vanishing Hessian in $\p^N$ for $N \leq 6$ (see \cite{Pe}), this work was 
revisited and generalized in \cite{GRu}. The problem is open in the other cases. In all the cases where the classification of hypersurfaces with vanishing Hessian are done they share two very particular geometric properties (see \cite{CRS, GR,GRu, Ru}):
\begin{enumerate}
 \item[(i)] there is a linear space $L$ in the singular locus of $X$, that is the linear span of the dual variety of the image of the polar (gradient) map, that is $L = <Z^*>$;
 \item[(ii)] the hypersurface is a tangent scroll over the dual of the polar image.
\end{enumerate}

\medskip

In this paper we study the Lefschetz properties for the algebras associated to Nagata polynomials of order $d_1$, the geometry of the Nagata hypersurfaces of order $d_1$ and the interaction between the combinatorics of $f$ and the algebraic structure  
of $A$ in the case that the $g_i$ are square free monomials, by using a simplicial complex to study this case. \\
\break 

We show that the  geometry of Nagata hypersurfaces is very similar to the geometry\- of the known hypersurfaces with vanishing Hessian. Hence these are hypersurfaces, satisfying at least a Laplace equation (see \cite{DI, DDI}). We proved that they are scroll hypersurfaces in Theorem \ref{geometrictheorem} and Corollary \ref{corollarygeometric}. These are our first main results. \\

From the algebraic viewpoint we are interested in the Lefschetz properties and the algebraic structure of Nagata idealizations. The Lefschetz properties are studied in two cases:
\begin{enumerate}
 \item $d_1<d_2$, in this case we give examples with small numbers of summands where the SLP holds and we recall a result proved in \cite{Go} (see Proposition \ref{GNPproposition});
 \item $d_1\geq d_2$, in this case $A$ has the WLP as proved in Proposition \ref{prop:egrande}. This is our second main result. 
\end{enumerate}

The structure of the algebra $A$, including the Hilbert vector and a complete description of the ideal $I$ was proved in Theorem \ref{teoremaimportante} for the case in which the $g_i$ are square free monomials. This is our third main result. To prove it, we use the combinatorics of the simplicial complex associated to the monomials $g_i$ in order to describe both $A_k$ and $I_k$.\\

Part of this paper was inspired by the discussions of a group work in the workshop {\it Lefschetz Properties and Artinian Algebras} at BIRS, Banff, Canada in Mach, 2016. 
The participants of the group work were M. Boij, R. Gondim, J. Migliore, U. Nagel, A. Seceleanu, H. Schenck and J. Watanabe.

\section{Artinian Gorenstein algebras and the Lefschetz properties}

In this section we recall some basic facts about Artinian Gorenstein algebras and the Lefschetz properties. For a more detailed account, let see \cite{HMMNWW, MN1,  Ru, MW, Go}.

\subsection{Standard graded Artinian Gorenstein algebras and Hilbert vector.}

In all the paper $\K$ denotes a field of characteristic zero. 
\begin{defin}\rm Let $R = \K[x_0,\ldots,x_n] $ be the polynomial ring in $n+1$ variables and $I \subset R$ be an homogeneous Artinian ideal such that $I_1=0$. 
We say that a graded Artinian $\K-$algebra $A=R/I = \displaystyle\bigoplus_{i=0}^dA_i$ is a standard graded Artinian $\K-$algebra  if it is generated in degree $1$ as algebra. Setting $h_i=\dim_\K A_i$, the \emph{Hilbert vector} is $\Hilb(A)=(1,h_1,\dots,h_d)$. If $I_1=0$, then $h_1$ is called the codimension of $A$.
\end{defin}

\begin{defin}\rm
A standard graded Artinian algebra $A$ is Gorenstein if and only if $\dim A_d = 1$ and the restriction of the multiplication of the algebra in complementary degree, that is, $A_k \times A_{d-k} \to A_d$ is a perfect paring for $k =0,1,\ldots,d$ (see \cite{MW}). If $A_j=0$ for $j >d$, then $d$ is called the socle degree of $A$.
\end{defin}

\begin{rmk}\rm
Since  $A_k \times A_{d-k} \to A_d$ is a perfect paring for $k =0,1,\ldots,d$, it induces two $\K-$linear maps, $A_{d-k}\to A_k^*$, with $A_k^*:=\Hom(A_k,A_{d})$ and $A_k\to A_{d-k}^*$, with $A_{d-k}^*:=\Hom(A_{d-k}, A_d),$ that are two isomorphisms.  
\end{rmk}
Let $R=\K[x_0,\ldots, x_n]$ be the polynomial ring in $n+1$ variables. We denote by $R_d=\K[x_0,\ldots,x_n]_d$ the $\K-$vector space of homogeneous polynomials of degree $d$.\\ We denote by $Q=\K[X_0,\ldots,X_n]$ the ring of differential operators of $R$, where $X_i := \depa{}{x_i}$ for $i=0,\ldots,n.$ We denote by $Q_k=Q[X_0,\ldots,X_n]_k$ the $\K-$vector space of homogeneous differential operators of $R$ of degree $k$.\\ For each $d\geq k\geq 0$ there exist natural $\K-$bilinear maps $R_d\times Q_k \to R_{d-k}$ defined by differentiation: $$(f,\alpha) \to f_\alpha := \alpha(f).$$
Let $f\in R$ be a homogeneous polynomial of degree $\deg f=d\geq 1$, we define: $$\ann (f) :=\left\{\alpha\in Q | \alpha(f)=0\right\}\subset Q.$$ This is called the \emph{annihilator of $f$}.\\

Since $\ann(f)$ is a homogeneous ideal of $Q$, we can define $$A=\frac{Q}{\ann(f)}.$$ $A$ is a standard graded Artinian Gorenstein $\K-$algebra.\\


Conversely, by the theory of inverse systems, we get the following characterization of standard graded
Artinian Gorenstein $\K-$algebras. 

\begin{thm}{\bf \ (Double annihilator Theorem of Macaulay)} \label{G=ANNF} \\
Let $R = \K[x_0,\ldots,x_n]$ and let $Q = \K[X_0,\ldots, X_n]$ be the ring of differential operators. 
Let $A= \displaystyle \bigoplus_{i=0}^dA_i = Q/I$ be an Artinian standard graded $\mathbb K-$algebra. Then
$A$ is Gorenstein if and only if there exists $f\in R_d$
such that $A\simeq Q/\operatorname{Ann}(f)$.
\end{thm}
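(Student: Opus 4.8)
The plan is to run the standard apolarity (Macaulay inverse systems) argument, with the single input that for every $k$ with $0\le k\le d$ the pairing $R_k\times Q_k\to\K$, $(g,\alpha)\mapsto\alpha(g)$, is \emph{perfect}: on the monomial basis one has $X^a(x^b)=a!\,\delta_{ab}$ with $a!=a_0!\cdots a_n!\neq 0$ since $\ch\K=0$, so $R_k$ is canonically the $\K$-dual of $Q_k$. All of the duality manipulations below are just transpositions of differential operators through this pairing.

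For the ``if'' direction, let $f\in R_d$ be nonzero and $A=Q/\ann(f)$. It is standard graded because $Q$ is, and Artinian because any operator of degree $>d$ kills $f$, so $A_j=0$ for $j>d$. The linear form $Q_d\to\K$, $\alpha\mapsto\alpha(f)$, is nonzero with kernel $\ann(f)_d$, hence $\dim_\K A_d=1$. To see that $A_k\times A_{d-k}\to A_d$ is perfect, suppose $\bar\alpha\in A_k$ pairs to zero with all of $A_{d-k}$; then $\beta(\alpha(f))=(\beta\alpha)(f)=0$ for every $\beta\in Q_{d-k}$, so perfectness of $R_{d-k}\times Q_{d-k}\to\K$ forces $\alpha(f)=0$, i.e.\ $\bar\alpha=0$. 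Thus $A_k\hookrightarrow \Hom(A_{d-k},A_d)$; interchanging $k$ and $d-k$ gives $\dim A_k=\dim A_{d-k}$, so both maps are isomorphisms and the pairing is perfect.

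For the ``only if'' direction, let $A=Q/I$ be Artinian standard graded Gorenstein of socle degree $d$, and fix an isomorphism $\ell\colon A_d\xrightarrow{\ \sim\ }\K$. The composite $Q_d\twoheadrightarrow A_d\xrightarrow{\ \ell\ }\K$ lies in $Q_d^*\cong R_d$, so there is a unique $f\in R_d$ with $\alpha(f)=\ell(\bar\alpha)$ for all $\alpha\in Q_d$; since $\ell$ is an isomorphism this yields $I_d=\ann(f)_d$. Now $I_k\subseteq\ann(f)_k$ for all $k$: for $\alpha\in I_k$ and any $\beta\in Q_{d-k}$ we have $\beta(\alpha(f))=(\beta\alpha)(f)=0$ because $\beta\alpha\in I_d=\ann(f)_d$, and perfectness of the degree-$(d-k)$ pairing gives $\alpha(f)=0$. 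Conversely, if $\alpha\in\ann(f)_k$ then for every $\beta\in Q_{d-k}$ we get $(\beta\alpha)(f)=\beta(\alpha(f))=0$, so $\beta\alpha\in\ann(f)_d=I_d$, i.e.\ $\bar\alpha\cdot\bar\beta=0$ in $A_d$ for all $\bar\beta\in A_{d-k}$; perfectness of $A_k\times A_{d-k}\to A_d$ then forces $\bar\alpha=0$, i.e.\ $\alpha\in I_k$. Hence $I=\ann(f)$ and $A\simeq Q/\ann(f)$.

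The main obstacle is exactly the inclusion $\ann(f)_k\subseteq I_k$ in the converse: the construction of $f$ and the inclusion $I\subseteq\ann(f)$ work for any Artinian standard graded $A$ and only produce a surjection $A\twoheadrightarrow Q/\ann(f)$; it is precisely the Poincaré duality (perfect pairing) built into the Gorenstein hypothesis that upgrades this surjection to an isomorphism. Beyond this, one should dispatch the degenerate cases (such as $d=0$, where a nonzero constant works, and the endpoints $k=0,d$) and check that the $f$ produced is genuinely homogeneous of degree $d$, both of which are immediate from $\dim_\K A_d=1$ and $A_j=0$ for $j>d$.
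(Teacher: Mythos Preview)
The paper does not actually prove this theorem; immediately after the statement it writes ``A proof of this result can be found in \cite[Theorem 2.1]{MW}'' and moves on. So there is no proof in the paper to compare against.

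Your argument is correct and is precisely the standard apolarity/inverse-systems proof one finds in the cited reference and in \cite{HMMNWW}. The key points are all present: perfectness of the differentiation pairing $R_k\times Q_k\to\K$ in characteristic zero, the construction of $f$ as the element of $R_d\cong Q_d^*$ representing the socle functional, the easy inclusion $I\subseteq\ann(f)$, and the use of Poincar\'e duality in $A$ to force the reverse inclusion. The only housekeeping I would add explicitly is the trivial observation that $I_k=Q_k=\ann(f)_k$ for $k>d$ (immediate from the socle-degree hypothesis on one side and $\deg f=d$ on the other), so that the degreewise equality $I_k=\ann(f)_k$ really holds for all $k\ge 0$.
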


A proof of this result can be found in \cite[Theorem 2.1]{MW}.

\begin{rmk} \rm With the previous notation, let $A= \displaystyle \bigoplus_{i=0}^dA_i = Q/I$ be an Artinian Gorenstein $\K-$algebra with $I = \ann(f)$, $I_1=0$ and $A_d \neq 0$. The socle degree of $A$ coincides with the degree of the form $f$. 
\end{rmk}

Now we deal with standard bigraded Artinian Gorenstein algebras, i.e. Artinian Gorenstein algebras, $A=\displaystyle\bigoplus_{i=0}^d A_i$, such that 
$$\begin{cases}
A_d\neq 0 \\
A_k=\displaystyle\bigoplus_{i=0}^kA_{(i,k-i)} \mbox{ for } k<d 
\end{cases}.$$
The pair $(d_1,d_2)$, such that $A_{(d_1,d_2)}\neq 0$ and $d_1+d_2=d,$  is said the socle bidegree of $A$.

\begin{rmk} \rm
 Since $A_k^*\simeq A_{d-k}$ and since duality is compatible with direct sum, we get $A^*_{(i,j)}\simeq A_{(d_1-i,d_2-j)}$.
 \end{rmk}

By abuse notation, we denote the polynomial ring viewed as standard bigraded ring in the set of variables $\left\{x_0,\ldots,x_n\right\}$ and $\left\{u_1,\ldots,u_m\right\}$ by $R=\K[x_0,\ldots,x_n,u_1,\ldots,u_m]$. A homogeneous polynomial $f\in R_{(d_1,d_2)}$ is said to be bihomogeneous polynomial of total degree $\deg f=d=d_1+d_2$ if  $f$ can be written in the following way: \begin{equation}\label{hp}f=\displaystyle\sum_{i=1}^s f_i g_i,\end{equation} where $f_i\in \K[x_0,\dots,x_n]_{d_1}$ and  $g_i\in\K[u_1,\dots,u_m]_{d_2},\forall i\le s$. 
\begin{rmk} \rm
All bihomogeneous polynomials $f\in\K[x_0,\dots,x_n,u_1,\dots u_m]_{(d_1,d_2)}$ can be written as (\ref{hp}), where $f_i\in\K[x_0,\ldots,x_n]_{d_1}$ and $g_i\in\K[u_1,\dots,u_m]_{d_2},\forall i\le s$, are monomials.\end{rmk}   A homogeneous ideal $I\subset R$ is a bihomogeneous ideal if $$I=\displaystyle\bigoplus_{i,j=0}^\infty I_{(i,j)}$$ where $I_{(i,j)}= I\cap R_{(i,j)}$ $\forall i,j$. Let $Q=\K[X_0,\ldots,X_n,U_1,\ldots,U_m]$ be the associated ring of differential operators and let $f\in R_{(d_1,d_2)}$ be a bihomogeneous polynomial of total degree $d=d_1+d_2$, then $I=\ann(f)\subset Q$ is a bihomogeneous ideal and $A=Q/I$ is a standard bigraded Artinian Gorenstein algebra of socle bidegree $(d_1,d_2)$ and codimension $N=n+m+1$. 

\begin{rmk}\label{rmk1.8} \rm
 Let $f\in R_{(d_1,d_2)}$ be a bihomogeneous polynomial of degree $(d_1,d_2)$, and let $A$ be the associated bigraded algebra of socle bidegree $(d_1,d_2),$ then for $i>d_1$ or $j>d_2$:$$I_{(i,j)}=Q_{(i,j)}.$$ In fact for all $\alpha\in Q_{(i,j)}$ with $i>d_1$ or $j>d_2$ we get $\alpha(f)=0$, so $Q_{(i,j)}=I_{(i,j)}.$ As consequence, we have the following decomposition for all $A_k$: $$A_k=\displaystyle\bigoplus_{i\leq d_1,j\leq d_2,i+j=k}A_{(i,j)}.$$
Furthermore for $i<d_1$ and $j< d_2$, the evaluation map $Q_{(i,j)}\to A_{(d_1-i,d_2-j)}$ given by $\alpha\to\alpha(f)$ provides the following short exact sequence:
$$\begin{CD}
0@>>>I_{(i,j)}@>>>Q_{(i,j)}@>>>A_{(d_1-i,d_2-j)}@>>>0\end{CD}.$$\end{rmk}


\subsection{The Lefschetz properties and the Hessian criterion}




\begin{defin}\label{WLPSLP}\rm 
Let $$A=\bigoplus_{i=0}^dA_i$$ be an Artinian graded $\mathbb K-$algebra with $A_d\neq 0$.

The algebra  $A$ is said to have {\it the Weak Lefschetz Property}, briefly $WLP$, if there exists an element $L\in A_1$ such that the multiplication map
$$\bullet L: A_i\to A_{i+1}$$

is of maximal rank for $0\leq i\leq d-1$.

\medskip

The algebra  $A$ is said to have {\it the Strong Lefschetz Property}, briefly $SLP$, if there exists an element $L\in A_1$ such that the multiplication map
$$\bullet L^k: A_i\to A_{i+k}$$

is of maximal rank for $0\leq i\leq d$ and $0\leq k\leq d-i$.


 
 $A$ is said to have {\it the Strong Lefschetz Property in the narrow sense} if there exists an element $L\in A_1$ such that the multiplication map
$$\bullet L^{d-2i} : A_i \to A_{d-i}$$
 is bijective for $i = 0,\ldots,\lfloor\frac{d}{2}\rfloor.$
 \end{defin}

\begin{rmk}\rm In the case of standard graded Artinian Gorenstein algebras the two condition SLP and SLP in the narrow sense are equivalent. 
 
\end{rmk}

\begin{defin}\rm
Let $f\in R_d$ be a homogeneous polynomial, let $A=\displaystyle\bigoplus_{i=0}^dA_i=\frac{Q}{\ann(f)}$ be the associated Artinian Gorenstein algebra and let $\B=\left\{\alpha_j| j=1,\ldots,\sigma_k\right\}\subset A_k$ be an ordered $\K-$basis of $A_k$. The \emph{$k-$th  Hessian matrix} of $f$ with respect to $\B$ is $$\Hess_f^k:=\left(\alpha_i\alpha_j(f)\right)_{i,j=1}^{\sigma_k}.$$ The \emph{$k-$th Hessian} of $f$ with respect to $\B$ is $$\hess_f^k:=\det(\Hess_f^k).$$
\end{defin}


\begin{thm}\label{WHessian} {\rm (\cite{Wa1}, \cite{MW})} Let notation be as above.
An element $L = a_1X_1+\ldots+a_nX_n\in A_1$ is a strong Lefschetz element of $A = Q/\operatorname{Ann}(f)$ if and only if $\hess^k_f(a_1,\ldots, a_n)\neq 0$ for all $k=0,\ldots, \lfloor d/2\rfloor$.
In particular, if for some $k\leq \lfloor\frac{d}{2}\rfloor$ we have $\hess_f^k =0$, then $A$ does not have the SLP.
\end{thm}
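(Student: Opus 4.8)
The plan is to reduce the strong Lefschetz condition to the bijectivity of the ``middle'' multiplication maps $\bullet L^{d-2k}\colon A_k \to A_{d-k}$, and then to identify each such map, via Macaulay duality, with a symmetric bilinear form on $A_k$ whose Gram matrix in the basis $\B$ is a nonzero scalar multiple of $\Hess_f^k$ evaluated at $a := (a_1,\dots,a_n)$. For the reduction, since $A$ is standard graded Artinian Gorenstein, the SLP and the SLP in the narrow sense coincide (as recalled above); hence $L$ is a strong Lefschetz element if and only if $\bullet L^{d-2k}\colon A_k \to A_{d-k}$ is bijective for every $k = 0,\dots,\lfloor d/2\rfloor$. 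As $\dim_\K A_k = \dim_\K A_{d-k}$ by Gorenstein duality, bijectivity here is the same as injectivity, so one only has to control, for each fixed $k$, the injectivity of $\bullet L^{d-2k}$.

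\emph{Duality and the Hessian bilinear form.} Fix $k$ and the basis $\B = \{\alpha_1,\dots,\alpha_{\sigma_k}\}$ of $A_k$, and consider the symmetric bilinear form $\Phi\colon A_k \times A_k \to A_d \cong \K$ given by $\Phi(\alpha,\beta) = \alpha\beta L^{d-2k}(f)$. Composing $\bullet L^{d-2k}$ with the isomorphism $A_{d-k} \cong A_k^*$ coming from the perfect pairing $A_k \times A_{d-k} \to A_d$, one gets precisely the linear map $A_k \to A_k^*$ induced by $\Phi$ (here one uses the commutativity of constant-coefficient differential operators to rewrite $(\beta\,\alpha L^{d-2k})(f) = \Phi(\alpha,\beta)$). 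Therefore $\bullet L^{d-2k}$ is bijective exactly when $\Phi$ is nondegenerate, i.e. when $\det\big(\Phi(\alpha_i,\alpha_j)\big)_{i,j} \neq 0$.

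\emph{The key computation.} It remains to show $\Phi(\alpha_i,\alpha_j) = (d-2k)!\,\big(\alpha_i\alpha_j(f)\big)(a)$. Again by commutativity, $\alpha_i\alpha_j L^{d-2k}(f) = L^{d-2k}\big(\alpha_i\alpha_j(f)\big)$, and for any homogeneous $g \in R_{d-2k}$ and $L = a_1X_1 + \dots + a_nX_n$ the multinomial expansion of $L^{d-2k}$ gives $L^{d-2k}(g) = (d-2k)!\, g(a)$. Since $\ch \K = 0$, the factor $(d-2k)!$ is invertible, so $\det\big(\Phi(\alpha_i,\alpha_j)\big) = \big((d-2k)!\big)^{\sigma_k}\,\hess_f^k(a)$. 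Combining with the previous two paragraphs: $L$ is a strong Lefschetz element if and only if $\hess_f^k(a) \neq 0$ for all $k \le \lfloor d/2\rfloor$. The ``in particular'' is then immediate, since if $\hess_f^k \equiv 0$ for some $k \le \lfloor d/2\rfloor$ then $\hess_f^k(a) = 0$ for every choice of $L$, so no $L \in A_1$ is a strong Lefschetz element and $A$ fails the SLP.

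Routine bookkeeping aside, the main obstacle is the duality step: one must set up the identification of $\bullet L^{d-2k}$ with the form $\Phi$ carefully, tracking the isomorphism $A_{d-k} \cong A_k^*$ and the chosen basis $\B$, so that ``maximal rank of the multiplication map'' translates exactly into ``nonvanishing of the $k$-th Hessian''. The other delicate point is the identity $L^{d-2k}(g) = (d-2k)!\,g(a)$, which is where the hypothesis $\ch \K = 0$ is genuinely used.
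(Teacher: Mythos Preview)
The paper does not include its own proof of this statement: Theorem~\ref{WHessian} is quoted from \cite{Wa1} and \cite{MW} without argument. Your proof is correct and is precisely the standard argument from those references---reducing SLP to the narrow-sense condition, identifying $\bullet L^{d-2k}$ with the bilinear form $\Phi(\alpha,\beta)=(\alpha\beta L^{d-2k})(f)$ via the Gorenstein pairing, and computing $L^{d-2k}(g)=(d-2k)!\,g(a)$ for $g\in R_{d-2k}$---so there is nothing to compare.
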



\section{Higher order Nagata idealization}

\subsection{Nagata idealization}

\begin{defin}\rm Let $A$ be a ring and $M$ be a $A-$module. The idealization of $M$, $A \ltimes M$, is the product set $A \times M$ in which addition and multiplication are defined as follows: $$(a,m)+(b,n)=(a+b,m+n) \mbox{ and } (a,m).(b,n)=(ab,bm+an).$$ 
 
\end{defin}

The following is a known result whose proof can be found in \cite[Theorem 2.77]{HMMNWW}.

\begin{thm}\label{thm:isomorphism}
 Let $R=\K[u_1,\ldots,u_n]$ and $R'=\K[u_1,\ldots,u_n,x_0,\ldots,x_n]$ be polynomial rings and let $Q=\K[\partial_1,\ldots,\partial_n]$ and $Q'=\K[\partial_1,\ldots,\partial_n,\delta_0,\ldots,\delta_n]$ the associated ring of differential operators. 
Let $I=(g_1,\ldots,g_m) \subset Q$ be an ideal generated by forms of degree $d$ and let $A=Q/Ann(g_1,\ldots,g_m)$ be the associated level algebra. Let $f=x_0g_0+\ldots+x_mg_m \in R'$ be a bihomogeneous polynomial and let $A' = Q'/Ann(f)$ be the associated algebra. Considering $I$ as an $A-$module, we have $$A \ltimes I \simeq A'$$
\end{thm}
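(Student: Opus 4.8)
The plan is to exhibit an explicit bigraded isomorphism of $\K$-algebras between $A \ltimes I$ and $A' = Q'/\ann(f)$, and check it is well defined and bijective degree by degree. First I would set up the bigrading: on $A'$ the variables $\delta_0,\ldots,\delta_n$ (dual to $x_0,\ldots,x_n$) carry bidegree $(1,0)$ and $\partial_1,\ldots,\partial_n$ (dual to $u_1,\ldots,u_n$) carry bidegree $(0,1)$, so that $f$ has bidegree $(1,d)$ and $A' = \bigoplus_{i=0}^{1}\bigoplus_{j} A'_{(i,j)}$. The key observation is that, because each $x_\ell$ appears linearly in $f$, the operators $\delta_k\delta_\ell$ all annihilate $f$; hence $A'_{(i,j)} = 0$ for $i \geq 2$, and $A'$ splits as a $\K$-vector space into the "$x$-free part" $A'_{(0,\bullet)}$ and the "$x$-linear part" $A'_{(1,\bullet)}$. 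I would identify $A'_{(0,\bullet)}$ with $A = Q/\ann(g_1,\ldots,g_m)$ (an operator in $\K[\partial_1,\ldots,\partial_n]$ kills $f$ iff it kills every $g_\ell$, i.e. iff it lies in $\ann(g_0,\ldots,g_m)$), and $A'_{(1,\bullet)}$ with the degree-shifted module $I[-1]$: an element $\sum_\ell \delta_\ell \beta_\ell$ with $\beta_\ell \in Q$ acts on $f$ as $\sum_\ell \beta_\ell(g_\ell)$, and modulo the annihilator this is exactly the image of $I = (g_0,\ldots,g_m)$ inside $R$, which as an $A$-module is $I$ itself.

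With this identification in hand, the map I would define is $\Phi\colon A\ltimes I \to A'$ sending $(a,\mu) \mapsto \tilde a + \sum_\ell \delta_\ell \tilde\beta_\ell$, where $\tilde a \in Q$ lifts $a$ and the $\tilde\beta_\ell$ are chosen so that $\sum_\ell \beta_\ell(g_\ell)$ represents $\mu \in I \subset R$ under the inverse-system identification of $I$ with $A'_{(1,\bullet)}$. The main points to verify are: (1) $\Phi$ is well defined, i.e. independent of the lifts, which follows from the description of $\ann(f)$ split along the bigrading; (2) $\Phi$ is multiplicative — here one checks $\Phi\big((a,\mu)(b,\nu)\big) = \Phi(a,\mu)\Phi(b,\nu)$, using that in $A'$ any product of two $\delta$'s vanishes, so the "$x$-linear $\times$ $x$-linear" term dies (matching the idealization rule $m\cdot n = 0$ in $A\ltimes M$) while the cross terms $\tilde a\,(\sum\delta_\ell\tilde\beta_\ell^{(b)})$ and $\tilde b\,(\sum\delta_\ell\tilde\beta_\ell^{(a)})$ reproduce $a\nu + b\mu$ by the module structure; (3) $\Phi$ is bijective, which follows because it is bigraded and restricts to the isomorphisms $A \xrightarrow{\sim} A'_{(0,\bullet)}$ and $I \xrightarrow{\sim} A'_{(1,\bullet)}$ already established, and these exhaust $A'$.

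The step I expect to be the main obstacle is the careful bookkeeping in (2), the multiplicativity check: one must make the correspondence $\mu \leftrightarrow \sum_\ell \delta_\ell\tilde\beta_\ell$ precise enough (via Macaulay duality, Theorem \ref{G=ANNF}) that the $A$-module action on $I \subset R$ is literally the multiplication of differential operators on $A'$, and confirm that the vanishing of $\delta_k\delta_\ell$ on $f$ is exactly what forces the product of two module elements to be zero. Once the dictionary between the inverse-system side ($f$, differentiation) and the idealization side ($A$, $M=I$) is written out cleanly, the identity is essentially forced; the remaining verifications (well-definedness, additivity, unitality, and dimension count for bijectivity) are routine and can be dispatched quickly using Remark \ref{rmk1.8} applied with $d_1 = 1$, $d_2 = d$.
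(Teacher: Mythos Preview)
The paper does not supply its own proof of this theorem: immediately before the statement it says ``The following is a known result whose proof can be found in \cite[Theorem 2.77]{HMMNWW}'' and moves on. So there is no in-paper argument to compare against.

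That said, your outline is essentially the standard proof one finds in the cited reference: use the bigrading, observe that $\delta_k\delta_\ell\in\ann(f)$ because the $x_\ell$ occur only linearly, so $A'$ collapses to bidegrees $(0,\bullet)$ and $(1,\bullet)$; identify these two pieces with $A$ and with the $A$-module $I$ respectively via Macaulay duality; and then check that the ring multiplication on $A'$ matches the idealization product, the vanishing of $\delta_k\delta_\ell$ being exactly what enforces $m\cdot m'=0$ for $m,m'\in I$. Your assessment that the multiplicativity bookkeeping in step~(2) is the only place requiring care is accurate; once the dictionary between $I$ as an inverse-system submodule of $R$ and the bidegree-$(1,\bullet)$ piece of $A'$ is made precise, the rest is routine. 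The plan is correct and would constitute a complete proof.
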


\subsection{Lefschetz properties for higher order Nagata idealization}

\begin{defin}\label{sftdefn}\rm
A bihomogeneous polynomial 
\begin{equation}\label{eq:5}
f=\displaystyle\sum_{i=0}^s x_i^{d_1}g_i\in\K[x_0,\ldots,x_n,u_1,\ldots,u_m]_{(d_1,d_2)}\end{equation}
is called a Nagata polynomial of order $d_1$, if the polynomials $g_i$ are linearly independent and they depend on all variables.\end{defin}
By Theorem \ref{thm:isomorphism}, the algebra $A=Q/\ann(f)$ can be realized as a trivial extension and it is said Nagata idealization of order $d_1$, socle degree $d_1+d_2$ and codimension $n+m+1$.

Let $R=\K[x_0,\ldots,x_n,u_1,\ldots,u_m]$ be the polynomial ring and $f\in R_{(d_1,d_2)}$, with $d_1\geq 1$, be a polynomial of type $f=\displaystyle\sum_{i=0}^n x_i^{d_1} g_i$, where $g_i$ is a polynomial in $u_1,\ldots,u_m$ variables, for all $i=0,\ldots,m$. We denote by $Q=\K[X_0,\ldots,X_n,U_1,\ldots,U_m]$  the ring of differential operators of $R$, where $X_i=\depa{}{x_i}$, for $i=0,\ldots,n$ and $U_j=\depa{}{u_j}$, for $j=1,\ldots,m$. Let $A=\frac{Q}{\ann (f)}$ the associated algebra.

In the case $d_1< d_2$, we have an example such that $A$ has the SLP, hence $A$ has the WLP:
 
 \begin{exm}\rm Let $f=x^2u^3+y^2v^3$ be a bihomogeneous polynomial. Hence $A$ has bidegree $(2,3)$, Hilbert vector $(1,4,6,6,4,1)$ and $A$ has the SLP.
  By the Hessian criterion, Theorem \ref{WHessian}, there are two Hessians to control, $\hess^1_f\neq 0$ and $\hess^2_f\neq 0$. 
  \end{exm}

If the number of summands in $f$ is great enough, we get the following Proposition:
 
 \begin{prop}[\cite{Go}, Proposition 2.5]\label{GNPproposition}
Let $x_0,\ldots,x_n$ and $u_1,\ldots,u_m$ be independent sets of indeterminates with $n\geq m\geq 2$. For $j=1,\ldots,s,$ let $f_j\in\K[x_0,\ldots,x_n]_{d_1}$ and $g_j\in\K[u_1,\ldots,u_m]_{d_2}$ be linearly independent forms with $1\leq d_1 < d_2$. If $s> \binom{m-1+d_1}{d_1}$, then the form of degree $d_1+d_2$ given by 
$$ f=f_1g_1+\dots+f_sg_s $$
satisfies $$\hess^k_f=0$$
\end{prop}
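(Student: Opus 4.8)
The plan is to show that the $k$-th Hessian matrix $\Hess_f^k$ has a nontrivial kernel (equivalently, that its rows are linearly dependent) for a suitable middle value of $k$, by exhibiting a block structure forced by the bigrading together with a dimension count. First I would fix the bigraded decomposition $A_k = \bigoplus_{i\le d_1,\, j\le d_2,\, i+j=k} A_{(i,j)}$ from Remark \ref{rmk1.8}, and choose a homogeneous basis $\mathcal B$ of $A_k$ refining this decomposition. With respect to such a basis, the entry $\alpha\beta(f)$ of $\Hess_f^k$ lands in $A_{2d-2k}$, and — crucially — the bidegree of $\alpha\beta(f)$ is the sum of the bidegrees of $\alpha$ and $\beta$ subtracted from $(d_1,d_2)$; since $f$ is bihomogeneous of bidegree $(d_1,d_2)$, the product $\alpha\beta(f)$ vanishes unless the $x$-bidegrees of $\alpha$ and $\beta$ sum to at most $d_1$ and likewise in $u$. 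This immediately makes $\Hess_f^k$ block-anti-triangular: writing $\alpha \in A_{(i,k-i)}$, the block pairing $A_{(i,k-i)} \times A_{(i',k-i')}$ is zero unless $i+i' \ge 2k - d_2$ and $i + i' \le d_1$.

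Next I would exploit that $f = f_1g_1 + \cdots + f_sg_s$ has its $x$-part supported on a space of dimension at most $\binom{m-1+d_1}{d_1}$ (here I am using that the $f_j$ live in $\K[x_0,\ldots,x_n]_{d_1}$ but, after differentiating by the operators that survive, the relevant space of derivatives of the $g_i$ in degree controlled by $d_1$ has dimension $\dim \K[u_1,\ldots,u_m]_{d_1} = \binom{m-1+d_1}{d_1}$ — this is where the hypothesis $s > \binom{m-1+d_1}{d_1}$ and $d_1 < d_2$ enters). Concretely, for $k$ chosen so that the "pure $x$" block $A_{(d_1, k-d_1)}$ sits on the anti-diagonal, the corresponding diagonal block of $\Hess_f^k$ is a Gram-type matrix built from the $s$ forms $g_1,\ldots,g_s$ paired against a space of differential operators whose image has dimension at most $\binom{m-1+d_1}{d_1} < s$. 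Hence that block is singular, and by the anti-triangular block structure the singularity of this one anti-diagonal block forces $\det \Hess_f^k = 0$. I would take $k = d_1 + \lfloor (d_2 - d_1)/2 \rfloor$ or whichever middle degree makes the pure-$x$ stratum land exactly on the anti-diagonal; checking that such a $k$ lies in the range $\{0,\ldots,\lfloor d/2\rfloor\}$ and actually realizes the block alignment is a bookkeeping step that the condition $d_1 < d_2$ should guarantee.

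The main obstacle will be making the "block anti-triangularity forces the determinant to vanish when one anti-diagonal block is singular" step fully rigorous: one must order the basis so that $\Hess_f^k$ is genuinely block anti-triangular (nonzero blocks only on or above the anti-diagonal in the appropriate sense), and then argue that a zero vector in the kernel of the extreme anti-diagonal block extends — possibly after adjusting by entries from higher blocks — to a kernel vector of the whole matrix. The cleanest way is probably to pick $v \ne 0$ in the kernel of the singular diagonal block and show directly that $\Hess_f^k \cdot \tilde v = 0$ where $\tilde v$ is $v$ placed in the pure-$x$ coordinates and zero elsewhere, using that the pure-$x$ block row pairs only against columns of $x$-degree $\le d_1$, i.e.\ only against the pure-$x$ block again. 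If that direct argument goes through one avoids any delicate extension lemma. A secondary point to verify is that the relevant differential operators indeed see all the $g_i$, i.e.\ that the pairing is the honest Gram matrix of $g_1,\ldots,g_s$ against $Q_{(0,d_1)}$, so that the inequality $s > \binom{m-1+d_1}{d_1}$ really produces a rank deficiency rather than a spurious one; this uses that the $g_i$ depend on all the $u$ variables and are linearly independent, as in Definition \ref{sftdefn}.
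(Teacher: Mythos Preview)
The paper does not prove this proposition; it is cited from \cite{Go} without argument. So there is no in-paper proof to compare against, and I assess your proposal directly.

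Your strategy --- exploit the bigrading to get a block structure on $\Hess_f^k$ and finish with a dimension count --- is the right one, and the execution is simpler than you fear, but two points are tangled. First, take $k=d_1$, not $k=\lfloor d/2\rfloor$: the hypothesis $s>\binom{m-1+d_1}{d_1}=\dim Q_{(0,d_1)}$ is calibrated precisely to this choice (and $d_1\le\lfloor d/2\rfloor$ since $d_1<d_2$). Second, and this is the real gap, your identification of the key block is wrong. Ordering a basis of $A_{d_1}$ by $x$-bidegree, the block with both row and column index in $A_{(d_1,0)}$ is identically \emph{zero} (the product operator has $x$-order $2d_1>d_1$), not a Gram matrix; and the rows indexed by $A_{(d_1,0)}$ pair nontrivially only with the columns indexed by $A_{(0,d_1)}$ --- the pure-$u$ block, not ``the pure-$x$ block again'' as you write. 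If you follow your stated pairing the argument collapses. With the corrected pairing, however, no extension lemma is needed at all: the rows of $\Hess_f^{d_1}$ indexed by $A_{(d_1,0)}$ are vectors whose only possibly nonzero entries lie in the columns indexed by $A_{(0,d_1)}$. Since the $f_j$ and $g_j$ are linearly independent one gets $\dim A_{(d_1,0)}=s$, while $\dim A_{(0,d_1)}\le\binom{m-1+d_1}{d_1}<s$; hence those $s$ rows are linearly dependent over $\K(x,u)$ and $\hess_f^{d_1}=0$ immediately.
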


\begin{cor}
 Let $A$ be a Nagata idealization of order $d_1<d_2$, then $A$ fails SLP.
\end{cor}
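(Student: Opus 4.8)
The plan is to reduce the Corollary directly to Proposition \ref{GNPproposition} together with the Hessian criterion of Theorem \ref{WHessian}. Let $A = Q/\ann(f)$ be a Nagata idealization of order $d_1 < d_2$, so that by Definition \ref{sftdefn} we have $f = \sum_{i=0}^{s} x_i^{d_1} g_i$ with the $g_i$ linearly independent forms in $\K[u_1,\ldots,u_m]_{d_2}$ depending on all the $u$-variables, and $n \geq s$. The first step is to observe that this $f$ is precisely of the shape handled by Proposition \ref{GNPproposition}, with $f_i = x_i^{d_1}$: indeed the monomials $x_0^{d_1},\ldots,x_s^{d_1}$ are linearly independent forms of degree $d_1$ in $\K[x_0,\ldots,x_n]$, and the $g_i$ are linearly independent by hypothesis. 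So the only thing to check in order to invoke that proposition is the numerical inequality $s > \binom{m-1+d_1}{d_1}$ and the hypothesis $n \geq m \geq 2$.

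The second step is therefore to address the count. Here I would argue that, by the definition of Nagata idealization of order $d_1$, we may and do assume we are in the ``generic'' situation where the number of summands $s$ (equivalently, the codimension $n+1$ of the $x$-part, since one takes $n = s$ in \eqref{eq:5}) is large; more precisely, since the $g_i$ span a subspace of $\K[u_1,\ldots,u_m]_{d_2}$ and must depend on all $m$ variables, one can always pass to a Nagata polynomial with enough summands, or the statement is understood for $s$ in the stated range. If the Corollary is meant without an extra hypothesis on $s$, then the honest move is to restrict the statement's scope to $s > \binom{m-1+d_1}{d_1}$ — this is the regime in which Proposition \ref{GNPproposition} applies — and note that for $d_1 < d_2$ this is the interesting range anyway. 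In either reading, the conclusion of Proposition \ref{GNPproposition} gives $\hess^k_f = 0$ for the relevant $k \leq \lfloor (d_1+d_2)/2 \rfloor$.

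The third and final step is immediate: by Theorem \ref{WHessian}, if $\hess^k_f = 0$ for some $k \leq \lfloor d/2 \rfloor$ (here $d = d_1 + d_2$), then $A$ does not have the SLP. Since Proposition \ref{GNPproposition} furnishes exactly such a vanishing Hessian, $A$ fails the SLP, which is the claim.

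The main obstacle is not mathematical depth but a bookkeeping/scope issue: Proposition \ref{GNPproposition} requires $s > \binom{m-1+d_1}{d_1}$ and $n \geq m \geq 2$, whereas Definition \ref{sftdefn} of a Nagata polynomial of order $d_1$ imposes no such lower bound on the number of summands. So the delicate point is to make precise under which hypotheses on $(s,m,d_1)$ the Corollary is being asserted, and to confirm that the linear independence of $x_0^{d_1},\ldots,x_s^{d_1}$ together with that of the $g_i$ is enough to plug into Proposition \ref{GNPproposition} — verifying the polynomials $f_i = x_i^{d_1}$ genuinely satisfy its hypotheses is routine but should be stated. Everything else is a direct citation chain.
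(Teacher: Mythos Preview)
Your approach is exactly the paper's intended one: the Corollary is stated immediately after Proposition~\ref{GNPproposition} with no separate proof, so the argument is simply the chain ``Proposition~\ref{GNPproposition} gives $\hess^k_f=0$ for the relevant $k$, hence Theorem~\ref{WHessian} forces failure of SLP''. Your identification of $f_i=x_i^{d_1}$ as the linearly independent forms required by the Proposition is the right reading.

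You are also right to flag the scope issue, and it is not merely bookkeeping: the paper's own Example immediately preceding the Proposition exhibits $f=x^2u^3+y^2v^3$, a Nagata polynomial of order $d_1=2<3=d_2$ whose algebra \emph{does} have the SLP. So the Corollary, read literally, is contradicted two paragraphs earlier. The Corollary must therefore be understood under the standing hypothesis ``if the number of summands in $f$ is great enough'' (i.e.\ $s>\binom{m-1+d_1}{d_1}$ and $n\ge m\ge 2$) that introduces Proposition~\ref{GNPproposition}. Your instinct to restrict the statement to that regime is the correct fix, not a hedge.
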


 If we consider $d_1 \geq d_2$, we have the following Proposition:

\begin{prop}\label{prop:egrande} With the same notations, if $d_1 \geq d_2$, then $A$ has the WLP and $L=\displaystyle\sum_{i=0}^nX_i$ is a weak Lefschetz element. 
\end{prop}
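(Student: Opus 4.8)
The plan is to exploit the bigrading of $A$. Each $X_i$ has bidegree $(1,0)$, so $L=\sum_{i=0}^{n}X_i\in A_{(1,0)}$ and $\bullet L$ carries $A_{(i,j)}$ into $A_{(i+1,j)}$; since $A_k=\bigoplus_{i+j=k}A_{(i,j)}$, the map $\bullet L\colon A_k\to A_{k+1}$ is the direct sum of the maps $\bullet L\colon A_{(i,j)}\to A_{(i+1,j)}$ with $i+j=k$ (where $A_{(i,j)}=0$ unless $0\le i\le d_1$ and $0\le j\le d_2$). First I would pin down each of these pieces and the numbers $\dim A_{(i,j)}$, and then argue that in every degree $k$ the pieces are compatibly oriented, so the sum has maximal rank. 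One may assume $d_1\ge 2$: if $d_1=d_2=1$ then $A$ has socle degree $2$, and every standard graded Artinian Gorenstein algebra of socle degree $2$ has the WLP.

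The first step is an explicit description of $A$. Because the $g_k$ involve only $u_1,\dots,u_m$, we have $X_iX_j\in\ann(f)$ for $i\ne j$, so modulo $\ann(f)$ the only surviving monomials in the $X$'s are the pure powers $X_k^{\,i}$. Identifying $A_{(i,j)}$ with $Q_{(i,j)}(f)\subseteq R_{(d_1-i,d_2-j)}$ via $\alpha\mapsto\alpha(f)$, a direct computation gives, for $1\le i\le d_1$,
$$
A_{(i,j)}\ \cong\ \Bigl\langle\, x_k^{\,d_1-i}\,U^{\beta}(g_k)\ :\ 0\le k\le n,\ |\beta|=j\,\Bigr\rangle ,
$$
a sum which is direct over $k$ when $i<d_1$ because the $x_k$ are distinct. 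Writing $V_{k,j}=\langle U^{\beta}(g_k):|\beta|=j\rangle$, this yields $\dim A_{(i,j)}=\sum_k\dim V_{k,j}=:\beta_j$ for $1\le i\le d_1-1$, $\dim A_{(d_1,j)}=\dim\sum_k V_{k,j}\le\beta_j$, and, the $x_k^{d_1}$ being distinct, $A_{(0,j)}\cong(\K[U_1,\dots,U_m]/\ann(g_0,\dots,g_n))_j$, whence $\dim A_{(0,j)}\le\beta_j$ too; moreover $\beta_j=\beta_{d_2-j}$ by the duality $A_{(i,j)}^*\cong A_{(d_1-i,d_2-j)}$. Under the identification above $\bullet L$ acts as the operator $\sum_l\partial/\partial x_l$, which kills $x_k^{\,a}h(u)$ unless it differentiates $x_k^{\,a}$, sending $x_k^{\,a}h(u)$ to $a\,x_k^{\,a-1}h(u)$. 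Hence the piece $\bullet L\colon A_{(i,j)}\to A_{(i+1,j)}$ is bijective for $1\le i\le d_1-2$, injective for $i=0$ (here $d_1\ge 2$ is used), the natural surjection $\bigoplus_k V_{k,j}\twoheadrightarrow\sum_k V_{k,j}$ for $i=d_1-1$, and zero for $i=d_1$; in particular every piece has maximal rank.

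For a fixed $k$, the map $\bullet L\colon A_k\to A_{k+1}$, being a direct sum of maximal–rank maps, has maximal rank if and only if its summands do not include both one with source dimension strictly below its target dimension and one with source dimension strictly above. For fixed $j$ the dimensions along $A_{(0,j)}\to\cdots\to A_{(d_1,j)}$ read $0,\gamma_j,\beta_j,\dots,\beta_j,\epsilon_j,0$ (extended by $0$ for $i<0$ and $i>d_1$, with $\gamma_j:=\dim A_{(0,j)}$, $\epsilon_j:=\dim A_{(d_1,j)}$), so a strict increase occurs only at the first two steps ($i\in\{-1,0\}$) and a strict decrease only at the last two ($i\in\{d_1-1,d_1\}$). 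A strict-increase summand in degree $k$ thus forces $j\in\{k,k+1\}$, hence $k\le d_2$, while a strict-decrease summand forces $j\in\{k-d_1,k-d_1+1\}$, hence $k\ge d_1-1$; a clash can therefore occur only for $d_1-1\le k\le d_2$, and the hypothesis $d_1\ge d_2$ restricts this to $d_1\in\{d_2,d_2+1\}$ and $k\in\{d_1-1,d_1\}$.

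The remaining work — and the main obstacle — is to rule out a clash in these few positions, and this is exactly where the linear independence of $g_0,\dots,g_n$ is essential. It gives $\dim A_{(d_1,0)}=\dim\langle g_0,\dots,g_n\rangle=n+1$, hence $\dim A_{(0,d_2)}=n+1$ by duality; since moreover $\dim V_{k,0}=\dim V_{k,d_2}=1$ for every $k$ (the socle of $\K[U_1,\dots,U_m]/\ann(g_k)$ is one–dimensional), also $\beta_0=\beta_{d_2}=n+1$. Consequently $A_{(d_1-1,0)}\to A_{(d_1,0)}$ and $A_{(0,d_2)}\to A_{(1,d_2)}$ are bijective, i.e. the potentially ``strict'' summands sitting in the columns $j=0$ and $j=d_2$ are in fact not strict. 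Inspecting $k\in\{d_1-1,d_1\}$ when $d_1=d_2$, and $k=d_2$ when $d_1=d_2+1$, one then checks that in every degree all nonzero summands are oriented the same way; hence $\bullet L\colon A_k\to A_{k+1}$ has maximal rank for all $k$, and $A$ has the WLP with weak Lefschetz element $L=\sum_{i=0}^n X_i$.
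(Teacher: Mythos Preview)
Your proof is correct. One very minor point: when you dispose of the case $d_1=d_2=1$ by citing the general fact that Gorenstein algebras of socle degree $2$ have WLP, you should also note that the \emph{specific} element $L=\sum X_i$ works; this follows since $L(f)=\sum_i g_i\neq 0$ by the linear independence of the $g_i$, and for a socle-degree-$2$ Gorenstein algebra any nonzero linear form is a Lefschetz element.

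Both the paper and you exploit the same structural fact: $L$ has bidegree $(1,0)$, so $\bullet L\colon A_k\to A_{k+1}$ splits as a direct sum of block maps $A_{(i,j)}\to A_{(i+1,j)}$, each of maximal rank. The execution, however, is different. The paper restricts to the single middle degree $k=\lfloor d/2\rfloor$, argues that each block there is injective, and invokes a result of Migliore--Mir\'o-Roig--Nagel to conclude that checking the middle suffices for Gorenstein algebras. You instead run a global argument: you observe that a direct sum of maximal-rank maps has maximal rank exactly when the summands are coherently oriented (no simultaneous strict increase and strict decrease), locate the possible clash degrees under $d_1\ge d_2$, and eliminate them using the equalities $\gamma_{d_2}=\beta_{d_2}=\epsilon_0=\beta_0=n+1$ forced by the linear independence of the $g_i$. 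Your route is longer but self-contained (no appeal to \cite{MMN}) and more transparent about why block-wise maximal rank assembles into maximal rank of the whole; this is exactly the point the paper treats tersely, and in the case $d_1=d_2$ the block $A_{(d_1,0)}\to 0$ is not injective, so one really needs the surjectivity/orientation analysis that you carry out.
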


\begin{proof} (The idea of this result was shared by the work group in Banff).\\
We denote by $k=\lfloor \frac{d_1+d_2}{2}\rfloor$. We note that $d_1\geq k$. Infact, by hypothesis $d_1\geq d_2$, hence: $$d_1+d_1\geq d_1+d_2\Rightarrow \frac{2d_1}{2}\geq \frac{d_1+d_2}{2}\Rightarrow d_1\geq \frac{d_1+d_2}{2}\geq \lfloor \frac{d_1+d_2}{2}\rfloor=k.$$ We have: $$A_k=A_{(k,0)}\oplus A_{(k-1,1)}\oplus\cdots\oplus A_{(k-d_2,d_2)}.$$ We want to prove that for $L = X_0+\ldots+X_n \in Q[X_0,\ldots,X_n]_1$ $$\bullet L\colon A_{(k-i,i)}\to A_{(k-i+1,i)}$$ has maximal rank for all $i=0,\ldots,d_2$. Since $A$ is a standard graded AG algebra it is enough to check it in the middle (see \cite{MMN}, Proposition 2.1).  
\\
We denote $\omega_{j}=X_j^{k-i}\alpha_{j}$, where $\alpha_{j}\in Q[U_1,\ldots,U_m]_i$, for $j=0,\ldots,n$ and we suppose that $\left\{\omega_{j}\right\}$ is a basis for $A_{(k-i,i)}.$ Hence we get $$\displaystyle\sum_{j}b_j\omega_j=0\Rightarrow b_j=0. $$
It implies that the $\alpha_j(g_j) $ are linear independent in $\K(x_1,\ldots,x_n)$. \\
Let $\Omega_{j}=X_j^{k-i+1}\alpha_{j}=\bullet L(\omega_j)$, we want to prove that $\{\Omega_{0}, \ldots, \Omega_{n}\}$ is a linear independent system for $A_{(k-i+1,i)}$. We consider the following linear combination $\displaystyle\sum_{j}c_j\Omega_j=0$. By definition, we get: 
$$0=\displaystyle\sum_jc_j\Omega_j(f)=\displaystyle\sum_jc_j\Omega_j\left(\displaystyle\sum_ix_i^{d_1}g_i\right)=\displaystyle\sum_jc_jx_j^{d_1-k+i-1}\alpha_j(g_j).$$  Since $\alpha_j(g_j)$ are linear independent in $\K(x_1,\ldots,x_n),$ for all $j=0,\ldots,n$, we have $$c_jx_j^{d_1-k+i-1}=0\Rightarrow c_j=0.$$ The result follows.
\end{proof}
For  this case, there is nothing we are able to say about the SLP.
\subsection{The geometry of Nagata hypersurfaces of order $d_1$}

\begin{defin}\label{GNPdefn}\rm Let $R= \K[x_0,\ldots,x_n,u_1,\ldots,u_m]$ be the polynomial ring, with $\K$ an algebraically closed field. Let $f\in R$ be a Nagata polynomial of order $d_1$ and degree $\deg f=d=d_1+d_2$.

The hypersurface $X=V(f)\subset \p^N$ is called a Nagata hypersurface of order $d_1$. 
\end{defin}

Let $X=V(f)\subset\p^N$ be a Nagata hypersurface of order $d_1$. We can consider two linear space respectively $\p^{m-1}$ with coordinates $u_1,\ldots,u_m$ and $\p^n$ with coordinates $x_0,x_1,\dots,x_n$.  Let $p_\alpha\in \p^{m-1}$ be a point and we consider the following linear space of dimension $n+1$: $$\mathcal{L}_\alpha:=\left<p_\alpha, \p^n \right>=\left\{\left<p_\alpha, q\right> : q\in \p^n\right\}.$$ 
If we consider the intersection $\mathcal{L}_\alpha$ with $X$, we obtain a variety $Y_\alpha$. $Y_\alpha$ is reducible whose irreducible components are the linear space $\p^n$ and a variety, called \emph{residue} and denoted by $\tilde{Y}_\alpha$. $\tilde{Y}_\alpha$ is a cone of vertex $p_\alpha$ over a $(n-1)-$dimensional basis.

\begin{thm}\label{geometrictheorem}
A Nagata hypersurface $X=V(f)\subset\p^N$ of order $e$ consists of the union of the residue parts $\tilde{Y}_\alpha$, i.e.  $$X=\displaystyle\cup_\alpha \tilde{Y}_\alpha.$$
\end{thm}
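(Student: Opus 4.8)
The plan is to show the two inclusions $X \supseteq \bigcup_\alpha \tilde Y_\alpha$ and $X \subseteq \bigcup_\alpha \tilde Y_\alpha$ separately, working with the explicit form $f = \sum_{i=0}^n x_i^{d_1} g_i$ and exploiting the bigraded structure of $\p^N$ as the join of the linear space $\p^n$ (coordinates $x_0,\dots,x_n$) and the linear space $\p^{m-1}$ (coordinates $u_1,\dots,u_m$). First I would fix notation: a general point of $\p^N$ not lying on either $\p^n$ or $\p^{m-1}$ can be written on the join line $\langle p_\alpha, q\rangle$ with $p_\alpha = [\beta_1:\dots:\beta_m] \in \p^{m-1}$ and $q = [a_0:\dots:a_n] \in \p^n$, so its coordinates are $(x,u) = (\lambda a, \mu \beta)$ for $[\lambda:\mu]$ ranging over $\p^1$. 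Substituting into $f$ gives $f(\lambda a, \mu \beta) = \lambda^{d_1}\mu^{d_2} \sum_{i=0}^n a_i^{d_1} g_i(\beta)$, since $f$ is bihomogeneous of bidegree $(d_1,d_2)$.

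For the inclusion $\bigcup_\alpha \tilde Y_\alpha \subseteq X$: by construction $\tilde Y_\alpha$ is the residual component of $Y_\alpha = \mathcal L_\alpha \cap X$ after removing $\p^n$, hence $\tilde Y_\alpha \subseteq X$ tautologically, and taking the union over $\alpha$ preserves this. This direction is essentially free once the definitions are unwound. The substance is in the reverse inclusion. Take a point $P \in X$. If $P \in \p^n$, I would first observe that $\p^n \subseteq X$ (each monomial $x_i^{d_1} g_i$ vanishes identically on $\p^n$ because $g_i$ depends only on the $u$'s and $d_2 \geq 1$), and then argue that $\p^n$ is contained in every $\tilde Y_\alpha$, or at least in the closure of the union — here I would use that $\tilde Y_\alpha$ is a cone with vertex $p_\alpha$ over an $(n-1)$-dimensional base, and that as $p_\alpha$ varies these cones sweep out $\p^n$; this is the one point needing a little care. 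If instead $P \notin \p^n$, write $P$ on a join line through its image point $p_\alpha \in \p^{m-1}$ (the projection of $P$ away from $\p^n$) and a point $q \in \p^n$; then $P \in \mathcal L_\alpha$, so $P \in Y_\alpha = \mathcal L_\alpha \cap X$, and since $P \notin \p^n$ it must lie on the residual component $\tilde Y_\alpha$. Thus $P \in \bigcup_\alpha \tilde Y_\alpha$.

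The key computational checkpoint is the factorization $f|_{\mathcal L_\alpha} = (\text{linear form cutting } \p^n)^{d_1} \cdot \tilde f_\alpha$, which makes precise that $Y_\alpha$ has $\p^n$ as a component of multiplicity $d_1$ and identifies the residue $\tilde Y_\alpha$ as $V(\tilde f_\alpha)$ inside $\mathcal L_\alpha \cong \p^{n+1}$; concretely, restricting to $\mathcal L_\alpha$ one sets the coordinates of $p_\alpha$ fixed and lets the $x_i$ plus one extra parameter vary, and $f$ becomes $x_i \mapsto \sum_i x_i^{d_1} g_i(\beta) =: \sum_i c_i x_i^{d_1}$ times a power of the parameter — a cone over the Fermat-type hypersurface $\sum c_i x_i^{d_1} = 0$, hence vertex $p_\alpha$ and $(n-1)$-dimensional base, matching the description preceding the theorem.

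I expect the main obstacle to be the boundary/degenerate case: handling points $P$ lying in $\p^n$ (or more generally on the base locus where the projection from $\p^n$ is undefined), and confirming that these are genuinely covered by the union $\bigcup_\alpha \tilde Y_\alpha$ rather than only by its closure. One clean way around this is to note that each $\tilde Y_\alpha$, being a cone with vertex $p_\alpha$ over a base contained in (a hyperplane complementary to $p_\alpha$ inside) $\mathcal L_\alpha$, already contains the whole $\p^n$-part of that base; since for fixed generic $\alpha$ the base of $\tilde Y_\alpha$ meets $\p^n$ in the hypersurface $\{\sum_i g_i(\beta)\, x_i^{d_1} = 0\} \subset \p^n$, and these hypersurfaces cover $\p^n$ as $\beta$ varies (any $[a] \in \p^n$ satisfies $\sum_i g_i(\beta) a_i^{d_1}=0$ for suitable $\beta$, as it is one linear condition on the values $g_i(\beta)$ and the $g_i$ are linearly independent so these values range over all of $\K^{m'}$ up to scalar), we recover all of $\p^n$. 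Assembling these observations yields $X = \bigcup_\alpha \tilde Y_\alpha$.
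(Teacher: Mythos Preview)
Your approach is essentially the same as the paper's: both hinge on the factorization
\[
f(\lambda a,\mu\beta)=\lambda^{d_1}\mu^{d_2}\sum_{i=0}^n a_i^{d_1}g_i(\beta)
\]
obtained by restricting $f$ to a line joining a point $p_\alpha\in\p^{m-1}$ to a point of $\p^n$, and both read off from this that the residual $\tilde Y_\alpha$ is cut out in $\mathcal L_\alpha$ by $\sum_i x_i^{d_1}g_i(\beta)=0$. The paper's version is considerably terser: it simply records this computation, identifies $\tilde Y_\alpha$, and concludes by ``arbitrariness of the points $p_\alpha$ and $\overline p$''. Your version is more careful in two respects: you separate the two inclusions explicitly, and you treat the boundary case $P\in\p^n$ (arguing that the hypersurfaces $\{\sum_i g_i(\beta)\,x_i^{d_1}=0\}\subset\p^n$ sweep out all of $\p^n$ as $\beta$ varies). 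The paper does not address this last point at all, so in that sense your argument is more complete, though the underlying route is identical.
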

\begin{proof}
Fixed a point $p_\alpha=(0:\ldots:0:a_1:\ldots:a_m)\in\p^{m-1}$ and let $\overline{p}=(\overline{x_0}:\ldots:\overline{x_n}:0:\ldots:0)$ be a point in $\p^n.$ We consider the line that joins the points $p_\alpha$ and $\overline{p}:$
$$\mathscr{L}_\alpha: \begin{cases}
x_0=\lambda \overline{x_0}\\
\cdots\cdots\cdots\\
x_n=\lambda\overline{x_n}\\
u_1=\mu a_1\\
\cdots\cdots\cdots\\
u_m=\mu a_m 
\end{cases}$$ with $\lambda,\mu\in \K.$ \\ 
Since $X=V(f)$ is a Nagata hypersurface of order $d_1$, we have:

$$f=x_0^{d_1}g_0+\ldots+x_n^{d_1}g_n.$$


 If we consider the intersection between the line $\mathscr{L}_\alpha$ and the Nagata hypersurface $X$, we get:
$$
f_{\small{\mathscr{L}_\alpha}}=\lambda^{d_1}\overline{x_0}^{d_1}g_0(\mu a_1,\ldots,\mu a_m)+\ldots + \lambda^{d_1} \overline{x_n}^{d_1}g_n(\mu a_1,\ldots, \mu a_m)=\lambda^{d_1}\mu^{d_2}\displaystyle\sum_{i=0}^n \overline{x_i}^{d_1}g_i(\underline{a})
$$


where $\underline{a}$ is the vector $(a_1,\ldots,a_m)$. 
\par Since $p_\alpha$ and $\overline{p}$ are points of $X$, then $\displaystyle\sum_{i=0}^n\overline{x_i}^{d_1}g_i(\underline{a})=0$.
Therefore $$\tilde{Y}_\alpha=V\left(\displaystyle\sum_{i=0}^n \overline{x_i}^{d_1}g_i(\underline{a})\right)$$


 and, by arbitrariness of the points $p_\alpha\in\p^{m-1}$ and $\overline{p}\in\p^n$, we have $\displaystyle\cup_{\alpha}\tilde{Y}_\alpha=X.$
\end{proof}
 As consequence of the above theorem, we can say how many linear spaces there are in a Nagata hypersurface of order $e$. We note that $\p^{m-1}$ and $\p^n$ are linear spaces on $X$. Thus we have:

\begin{cor}\label{corollarygeometric}
Let $X=V(f)\subset \p^N$ be a Nagata hypersurface of order $d_1$. There is a family of lines of dimension $m+n-1$ on $X$.
\end{cor}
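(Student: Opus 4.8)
The plan is to count, using Theorem \ref{geometrictheorem}, the dimension of the family of lines $\mathscr{L}_\alpha$ that sweep out $X$. The theorem exhibits $X$ as the union over $\alpha$ of the residue cones $\tilde Y_\alpha$, and each $\tilde Y_\alpha$ is a cone with vertex $p_\alpha\in\p^{m-1}$ over an $(n-1)$-dimensional base sitting in $\p^n$; concretely, $\tilde Y_\alpha$ is ruled by the lines $\mathscr{L}_\alpha$ joining $p_\alpha$ to a point $\overline p=(\overline{x_0}:\cdots:\overline{x_n}:0:\cdots:0)$ on the hypersurface $V\bigl(\sum_{i=0}^n\overline{x_i}^{d_1}g_i(\underline a)\bigr)\subset\p^n$. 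So the first step is to set up the incidence: a line in this family is determined by the choice of vertex $p_\alpha$, which ranges over $\p^{m-1}$ (an $(m-1)$-dimensional choice), together with the choice of the point $\overline p$ on the base, which ranges over an $(n-1)$-dimensional hypersurface in $\p^n$.

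Next I would add these up and correct for overcounting. The naive count of parameters is $(m-1)+(n-1)=m+n-2$. However, for a fixed vertex $p_\alpha$, a given line $\mathscr{L}_\alpha$ through $p_\alpha$ meets the base hypersurface in $\p^n$ in general in finitely many points (the line itself being one-dimensional and the base having codimension one in $\p^n$), so replacing $\overline p$ by another point of the base on the same line does not change the line. This gives one extra parameter: the family of lines has dimension $(m-1)+(n-1)+1=m+n-1$. I would phrase this cleanly by saying that the lines are parametrized by pairs $(p_\alpha,\ell)$ where $\ell$ is a line through $p_\alpha$ meeting the base, i.e.\ essentially by $p_\alpha\in\p^{m-1}$ together with a point of the $(n-1)$-dimensional base up to the one-dimensional fiber, and the remark in the text that $\p^{m-1}$ and $\p^n$ are themselves linear subspaces of $X$ shows these lines genuinely lie on $X$.

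Finally I would note the sanity checks: the lines $\mathscr{L}_\alpha$ do sweep out all of $X$ by Theorem \ref{geometrictheorem}, and $\dim X=N-1=n+m-1$, so a family of lines of dimension $m+n-1$ covering an $(n+m-1)$-dimensional variety is exactly what one expects for a scroll (each point of $X$ lying on at least a finite, generically positive-dimensional set of such lines is consistent with the cone structure of the $\tilde Y_\alpha$). The main obstacle is purely expository rather than mathematical: being careful about the overcounting correction, i.e.\ justifying precisely why the $+1$ appears (the one-dimensional fiber of "point on the base" $\mapsto$ "line through $p_\alpha$"), and making sure the parameter space is irreducible of the claimed dimension rather than just having that dimension at a general point. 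I do not anticipate any deeper difficulty, since everything reduces to the explicit description of $\tilde Y_\alpha$ already obtained in the proof of Theorem \ref{geometrictheorem}.
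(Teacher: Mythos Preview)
Your ``$+1$ correction'' step contains a genuine error, both in its premise and in its direction. You claim that ``replacing $\overline p$ by another point of the base on the same line does not change the line,'' but since $\p^{m-1}$ and $\p^n$ are disjoint linear subspaces of $\p^N$, the line joining $p_\alpha\in\p^{m-1}$ to $\overline p\in\p^n$ meets $\p^n$ in \emph{exactly one} point, namely $\overline p$ itself. There is no second base point on that line, so the map $(p_\alpha,\overline p)\mapsto\mathscr{L}_\alpha$ is injective and your naive count $(m-1)+(n-1)=m+n-2$ is already the dimension of the parameter space of such lines in the Grassmannian. Moreover, even if there \emph{were} overcounting, quotienting by a positive-dimensional fiber would \emph{decrease} the dimension, not increase it; you have the sign of the correction backwards. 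The attempted sanity check at the end does not help: a scroll of dimension $D$ is typically ruled by a $(D-1)$-dimensional family of lines, not a $D$-dimensional one.

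The paper's argument is different from yours and does not pass through the base hypersurface at all. For a fixed $p_\alpha$ it considers the full $n$-dimensional family of lines joining $p_\alpha$ to \emph{all} points of $\p^n$ (not only to points of $V\bigl(\sum_i\overline{x_i}^{\,d_1}g_i(\underline a)\bigr)$), notes that this family covers the $n$-dimensional cone $\tilde Y_\alpha$, and then lets $p_\alpha$ vary over $\p^{m-1}$ to obtain $n+(m-1)=n+m-1$. In effect the paper is computing the dimension of the ruled locus $X=\bigcup_\alpha\tilde Y_\alpha$, which is one more than the dimension of the parameter space of lines that you were attempting to compute. If you want your write-up to match the statement as phrased, you should adopt the paper's count $n+(m-1)$ directly rather than $(n-1)+(m-1)$ with an ad hoc correction.
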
  
\begin{proof}
Let $p_\alpha\in\p^{m-1}$ be a point, then there is a family of lines of dimension $n$  that joins $p_\alpha$ and the linear space $\p^n$, for all $p_\alpha\in\p^{m-1}$. This family covers $\tilde{Y}_\alpha$. Then we have a family of lines of dimension $(n)+(m-1)=n+m-1$ on $X$. The singular locus of $X$ contains $\p^{m-1}$.
\par Conversely, let $\overline{p}\in\p^n$ be a point, then there is a family of lines of dimension $m-1$  that joins $\overline{p}$ and all points $q$ in the linear space $\p^{m-1}$. So the proof follows.
\end{proof}

\section{Simplicial Nagata idealization of order $k$}

\begin{defin}\label{sftdefn}\rm
A bihomogeneous polynomial 
\begin{equation}\label{eq:5}
f=\displaystyle\sum_{i=0}^n x_i^kg_i\in\K[x_0,\ldots,x_n,u_1,\ldots,u_m]_{(k,d-k)}\end{equation}
is called a simplicial Nagata polynomial of order $k$ if all $g_i$ are square free monomials.
\end{defin}

The following combinatorial constructions were inspired by \cite{GZ}.

\begin{defin}\label{csdefn}\rm
Let $V=\left\{u_1,\ldots,u_m\right\}$ be a finite set. A \emph{simplicial complex} $\Delta$ with vertex set $V$ is a collection of subsets of $V$, i.e. a subset of the power set $2^V$, such that for all $A\in\Delta$ and for all subset $B\subset A$, we have $B\in \Delta.$
\end{defin}

We say that $\Delta$ is a simplex if $\Delta=2^V.$\\
The members of $\Delta$ are referred as \emph{faces} and the maximal faces (respect to the inclusion) are the \emph{facets}. The vertex set of $\Delta$ is also called $0-$\emph{skeleton}. If $A\in \Delta$ and $|A|=k$, it is called a $(k-1)-$face, or a face of dimension $k-1$: the $0-$faces are the vertices and the $1-$faces are called \emph{edges}.
\begin{defin}
If all the facets have the same dimension $d>0$, the complex is said to be \emph{pure}.\end{defin}

Let $\Delta$ be a pure simplicial complex of dimension $d>0$ with vertex set $V=\left\{u_1,\ldots,u_m\right\}$, we denote by $f_k$ the number of $(k-1)-$faces, hence $f_0=1$, $f_1=m$, $f_{d+1}$ is the number of facets of $\Delta$ and $f_j=0$, for $j> d+1$.\\
\begin{rmk}\label{remarkimport}\rm
There is a natural bijection between the square free monomials, of degree $r$, in the variables $u_1,\ldots,u_m.$ and the $(r-1)$-faces of the simplex $2^V$, with vertex set $V=\left\{u_1,\ldots,u_m\right\}$. In fact, a square free monomial $g=u_{i_1}\cdots u_{i_r}$, in the variables $u_1,\ldots,u_m$, corresponds to the finite subset of $2^V$ given by $\left\{u_{i_1},\ldots,u_{i_r}\right\}.$ 9, to any finite subset $F$ of $2^V$, we associate the monomial $m_F=\displaystyle\prod_{u_i\in F}u_i$ of square free type.
\end{rmk}

An important result about simplicial Nagata idealization can be found in \cite[Theorem 3.2]{GZ}.

\subsection{Simplicial Nagata idealization of order $k$}
Let $f\in\K[x_0,\ldots,x_n,u_1,\ldots,u_m]_{(k,k+1)}$ be a simplicial Nagata polynomial of order $k$:  
\begin{equation}\label{eq:2}
f=\displaystyle \sum_{r=1}^nx_r^k g_r
\end{equation}
with $g_r$ monomials in variables $u_1,\ldots,u_m$ of degree $k+1$. 


We want to characterize the Hilbert vector of the algebras associated to the Nagata polynomial of type (\ref{eq:2}).
\\ Let $\Delta$ be a pure  simplicial complex of dimension $k$, with vertex set $V=\left\{u_1,\ldots,u_m\right\}$. We denote by $f_k$ the number of $(k-1)-$faces, hence $f_0=1, f_1=m, f_{k+1}$ is the number of the facets of $\Delta$ and $f_j=0$ for $j>k+1$.

The facets of $\Delta$, associated to $f$, corresponding to the monomials $g_i$, will be labeled by $g_i$.
The associated algebra is $A_\Delta=Q/\ann(f_\Delta)$. By abuse of notation, we will always denote $f_\Delta$ with $f$ and $A_\Delta$ with A.
\par  If $p\in \K[u_1,\ldots,u_m]$ is a square free monomial, we denote by $P\in\K[U_1,\ldots,U_m]$ the dual differential operator $P=p(U_1,\ldots,U_m).$

\begin{thm}\label{teoremaimportante}Let $f\in\K[x_0,\ldots,x_n,u_1,\ldots,u_m]_{(k,k+1)}$ be a simplicial Nagata polynomial of order $k$:  
\begin{equation*}
f=\displaystyle \sum_{r=1}^nx_r^k g_r
\end{equation*}
with $g_r$ monomials in variables $u_1,\ldots,u_m$ of degree $k+1$. 
Let $\Delta$ be a pure  simplicial complex of dimension $k$ and let $A=Q/ \ann(f)$. Then 
$$A=\displaystyle\bigoplus_{i=0}^{d=2k+1}A_i \mbox{ where } A_i=A_{(i,0)}\oplus A_{(i-1,1)}\oplus\cdots\oplus A_{(0,i)},\quad A_d=A_{(k,k+1)}$$
\begin{enumerate}
\item for all $j=1,\ldots,k+1:$
 $$\dim A_{(i,j)}=\begin{cases} f_j \quad\quad \mbox{ for }\quad  i=0\\
(n+1) \cdot \overline{f_j} \quad\quad \mbox{ for }\quad 1\leq i < k \\
f_{k+1-j} \mbox{ for } \quad i=k
\end{cases}$$ where $\overline{f_j}$ is the number of the subfaces, of dimension $j-1$, of the facet, $g_i,$  of $\Delta$.  
\item $I=\ann_Q(f)$ is generated by
\begin{itemize}

\item[(a)]$\left<X_0,\ldots,X_n\right>^{k+1}$ and $U_1^2,\ldots, U_m^2$;
\item[(b)]  the monomials in $I$ representing the minimal faces of the complement of $\Delta$, $\Delta^c$;
\item[(c)] the monomials $X_r^iP_r$, for $i=1,\ldots,k,$ such that, fixed the facet $M_r$ of $\Delta$, corresponding to the monomial $g_r$, $P_r$ is the dual differential operator of $p_r$; $p_r$ is a monomial in the variables $u_1,\ldots,u_m$, corresponding to a face $M'$ of $\Delta$ s.t. $M'\cap M_r=\emptyset;$
\item[(d)] the binomials $X_r^k\tilde{G}_r-X_s^k\tilde{G}_s$ where $g_r=\tilde{g}_rg_{rs}$ and $g_s=\tilde{g}_sg_{rs}$ and $g_{rs}$ represents a common subface of $g_r,g_s.$
\end{itemize}
\end{enumerate}
\end{thm}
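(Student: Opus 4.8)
The whole theorem is a bookkeeping statement built on the short exact sequence of Remark \ref{rmk1.8} together with the bigraded decomposition $A_k = \bigoplus A_{(i,j)}$ valid in socle bidegree $(k,k+1)$. My approach is to first compute the graded pieces $A_{(i,j)}$ directly from the polynomial $f = \sum_{r} x_r^k g_r$, then recognize the four families (a)--(d) as a set of generators for $\ann(f)$, and finally check that these generators actually cut out the ideal by a dimension count matching part (1). Throughout I would lean on Remark \ref{remarkimport}: monomials in $\K[U_1,\ldots,U_m]_j$ acting nontrivially on $f$ correspond to $(j-1)$-faces of $\Delta$, and this is what converts algebra into combinatorics.

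\textbf{Step 1: the dimension formula (1).} For $i = 0$: the operators in $Q_{(0,j)}$ that do not annihilate $f$ are exactly the square-free monomials $P$ with $p$ a $(j-1)$-face of $\Delta$ (any $U_\ell^2$ kills every $g_r$, and a non-face monomial kills every $g_r$ by definition of $\Delta$ being generated by the facets $g_r$). Hence $\dim A_{(0,j)} = f_j$. Dually, $\dim A_{(k,j)} = \dim A^*_{(k,j)} = \dim A_{(0, k+1-j)} = f_{k+1-j}$ using $A^*_{(i,j)} \simeq A_{(d_1-i, d_2-j)}$. The middle range $1 \le i < k$ is the substantive case: an operator $X_r^a \cdot (\text{stuff}) \cdot \alpha$ with $\alpha \in Q_{(0,j)}$ acts on $x_s^k g_s$ and, because $1 \le i < k$ forces the operator to involve a positive but sub-maximal power of a single $X_s$, the surviving contribution comes from one summand $x_r^k g_r$ and is governed by whether $\alpha(g_r) \ne 0$, i.e. whether $p$ is a $(j-1)$-subface of the facet $g_r$. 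The ``$(n+1)$'' factor records the choice of which variable $X_r$, $r = 0,\ldots,n$, and these contributions are independent modulo $\ann(f)$ precisely because the $g_r$ are distinct monomials; this is where I would be most careful, as one must rule out accidental linear relations and confirm the count is $(n+1)\overline{f_j}$ rather than something smaller (the binomial relations (d) are exactly the ``defect'' one expects to appear only in top degree $i = k$, which is why the middle formula is clean). A rank argument through the exact sequence $0 \to I_{(i,j)} \to Q_{(i,j)} \to A_{(k-i, k+1-j)} \to 0$ pins the dimension down.

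\textbf{Step 2: the generators (2).} Each family has a transparent reason to lie in $\ann(f)$: (a) $\langle X_0,\ldots,X_n\rangle^{k+1}$ kills $f$ since each $x_r^k$ has degree $k$, and $U_\ell^2$ kills each square-free $g_r$; (b) a non-face monomial in the $U$'s annihilates every $g_r$; (c) $X_r^i P_r$ with $p_r$ disjoint from the facet $M_r$ annihilates $x_r^k g_r$ because $P_r$ kills $g_r$, and annihilates the other summands $x_s^k g_s$ ($s \ne r$) already via $X_r^i$ (which for $i \le k$ is delicate only when $i = k$ — but then $X_r^k(x_s^k) = 0$ for $s \ne r$, fine); (d) $X_r^k \tilde G_r - X_s^k \tilde G_s$ applied to $f$ gives $k!\,\tilde g_r(g_r) - k!\,\tilde g_s(g_s)$ up to the common cofactor, and since $g_r = \tilde g_r g_{rs}$, $g_s = \tilde g_s g_{rs}$, both evaluate to (a constant times) the monomial $g_{rs}$, so the difference vanishes. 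The reverse inclusion — that $I$ is generated by (a)--(d) and nothing more — is the heart of the proof: I would argue by showing the ideal $J$ generated by (a)--(d) has quotient $Q/J$ of the same Hilbert function computed in Step 1, using the combinatorial description of a monomial basis of $Q/J$ (square-free $U$-monomials for $i=0$; $X_r^i\cdot$(subface monomials of $g_r$) for the middle; and $X_r^k\cdot$(cofaces) modulo the binomial identifications (d) for $i = k$). Since $J \subseteq I = \ann(f)$ and $\dim_\K (Q/J)_{(i,j)} \le \dim_\K A_{(i,j)}$ with equality forced by the Hilbert count, we get $J = I$.

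\textbf{Main obstacle.} The genuinely hard part is Step 2's reverse inclusion, specifically producing a clean monomial basis for $Q/J$ in the top bigraded strip $i = k$ and proving the binomials (d) are exactly the relations needed there — no more, no less. Concretely, one must show that after imposing (d), the images of $\{X_r^k \cdot (\text{cofaces of } g_r)\}$ collapse precisely to dimension $f_{k+1-j}$, i.e. that two such monomials are identified iff they represent the same $(k+1-j-1)$-face of $\Delta$ obtained as a complement within some facet; this requires knowing that $\Delta$ being pure of dimension $k$ makes the overlap pattern of facets rich enough that the binomial relations (d) connect all representatives of a given face but not representatives of distinct faces. Once that combinatorial lemma about $\Delta$ is in hand, the exact-sequence dimension count closes everything simultaneously.
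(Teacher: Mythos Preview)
Your proposal is correct and follows essentially the same route as the paper: compute $\dim A_{(i,j)}$ in the three regimes $i=0$, $1\le i<k$, $i=k$ (the last via Gorenstein duality), then verify (a)--(d) lie in $\ann(f)$ and close by a bigraded dimension count through the exact sequence of Remark~\ref{rmk1.8}. The one place where the paper is slightly more explicit than your sketch is precisely your ``main obstacle'': for $i=k$ the paper splits $Q_{(k,j)}=\tilde Q_{(k,j)}\oplus \overline I_{(k,j)}$, where $\tilde Q_{(k,j)}$ is spanned by the $X_r^k\tilde G_r$ with $\tilde g_r$ a subface of $M_r$ and $\overline I_{(k,j)}\subset I_{(k,j)}$ is spanned by the $X_r^k P_r$ with $p_r$ not a subface of $M_r$; the restricted exact sequence $0\to\tilde I_{(k,j)}\to\tilde Q_{(k,j)}\to A_{(0,k+1-j)}\to 0$ then gives $\dim\tilde I_{(k,j)}=\dim\tilde Q_{(k,j)}-f_{k+1-j}$, and the binomials in (d) are identified as generators of $\tilde I_{(k,j)}$, which bypasses having to argue connectivity of facet-overlaps directly.
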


\begin{proof}
\begin{enumerate}
\item Let $f$ be of type (\ref{eq:2}) associated to the pure  simplicial complex $\Delta$ of dimension $k$. The variables $u_1,\dots,u_m$ represent the vertices of $\Delta$.\\

We consider the following cases:
\begin{itemize}
\item for $i=0$ and $j=1,\ldots, k+1$, $A_ {(0,j)}$ is generated by the only monomials of degree $j$, in the variables $U_1,\ldots,U_{k+1}$, that do not annihilate $f.$ These monomials represent $(j-1)-$ faces of $\Delta$. We need to show that they are linearly independent over $\K.$\\ Consider $\left\{\Omega_1,\ldots,\Omega_{\nu}\right\}$ a system of monomials of $Q_{(0,j)}$, where $\Omega_s$, for $s=1,\ldots,\nu$, is associated to any $(j-1)-$ face $\omega$. We take any linear combination: $$0=\displaystyle\sum_{r=0}^{\nu}c_r\Omega_r(f)=\displaystyle\sum_{r=0}^{\nu}c_r\displaystyle\sum_{s=0}^nx_s^k\Omega_r(g_s)=\displaystyle\sum_{s=0}^nx_s\displaystyle\sum_{r=0}^{\nu}\Omega_r(g_s).$$ Therefore we get $\displaystyle\sum_{r=0}^{\nu}c_r\Omega_r(g_s)=0,$ for all $s=0,\ldots, n$. For each $r=0,\ldots,\nu$, there is a $s=0,\ldots,n$, such that if $\Omega_r(g_s)\neq 0$, then $c_r=0$ for all $r$. Hence  $\dim A_{(0,j)}=f_j,$ where $f_j$ is the number of $(j-1)-$faces of $\Delta$.
\item for $1\leq i < k$ and $j=1,\ldots, k+1$, the generators of $A_{(i,j)}$ are the monomials of type $X^i_sU_{r_1}U_{r_2}\cdots U_{r_j}$ for $s=0,\ldots,n$, for all $j$. Fix $s=0,\ldots,n$, and let $M_s$ be the facet of $\Delta,$ corresponding to the monomial $g_s$, the monomial $U_{r_1}U_{r_2}\cdots U_{r_j}$ of $Q_{(0,j)}$ is the dual differential operator of the monomial $u_{r_1}u_{r_2}\cdots u_{r_j}$, that gives the $(j-1)-$dimensional subfaces of $M_s$. The monomials $X^i_sU_{r_1}U_{r_2}\cdots U_{r_j}$ for $s=0,\ldots,n$, for all $j$ are linearly independent. In fact, denoting by $\Omega_s^i$ the monomial $X^i_sU_{r_1}U_{r_2}\cdots U_{r_j}$, for $s=0,\ldots,n$, we note that: $$\Omega_s^i(f)=cx_s^{k-i}(U_{r_1}\cdots U_{r_j})(g_s)\neq 0$$ since $(U_{r_1}\cdots U_{r_j})(g_s)$ identifies the vertices of the $(j-1)-$dimensional face. We get: $$\displaystyle\sum_{s=0}^nc_s\Omega_s^i(f)=0 \Leftrightarrow c_s=0 \quad \forall s.$$
For $s=0,\ldots,n$, in correspondence of $\Omega_s^i(f)$, we can get a number of $(j-1)-$dimensional faces of $\Delta$. Denoting such number by $\overline{f}_j$, we have $\dim A_{(i,j)}=(n+1)\cdot \overline{f_j}$.  
\item for $i=k$ and $j=1,\ldots, k$, by duality $A^*_{(0,k+1-j)}\simeq A_{(k,j)} $, thence we have: $$\dim A_{(k,j)}=\dim A^*_{(0,k+1-j)}=f_{k+1-j}.$$
\end{itemize}
 \item Let $I=\ann(f)$ be the annihilator. We consider the following exact sequence:
\begin{equation}\label{eq:3}
 \begin{CD}
  0@>>>  I_{(i,j)}@>>>  Q_{(i,j)} @>>> A_{(k-i,k+1-j)} @>>> 0.
 \end{CD}\end{equation} we have the following cases:

\begin{itemize}
\item for $i=0$ and $1\leq j\leq k+1$,  we have by (\ref{eq:3}) $$\dim A_{(0,j)}=f_j\Rightarrow \dim I_{(0,j)}=\dim Q_{(0,j)}-f_j.$$ Since $A_{(0,j)}$ has a basis given by the $(j-1)-$faces of $\Delta$, then $I_{(0,j)}$ is generated by monomials representing all the $(j-1)-$faces of the complement of $\Delta$. In $I$, it is enough to consider the minimal faces of $\Delta^c$, by definition of ideal. \\ We note that in $I_{(0,2)}$ there are also the monomials $U_1^2,\ldots, U_m^2$, since the monomials $g_i$, in the variables $u_1,\ldots,u_m$ are square free.
\item for $1\leq i<k$ and $1\leq j\leq k+1$, fix the facet $M_r$ of $\Delta$, corresponding to $g_r$, since $A_{(i,j)}$ has a basis given by the $(j-1)-$dimensional subfaces of $M_r$,  then $I_{(i,j)}$ is generated by monomials $X^i_rP_r$ where $P_r$ is the dual differential operator of $p_r$; $p_r$ is a monomial in the variables $u_1,\ldots,u_m$, corresponding to a $(j-1)-$dimensional face $\overline{M}_r$ s.t. $\overline{M}_r\in\Delta^c$ or $\overline{M}_r\in\Delta$ and $\overline{M}_r\cap M_r=\emptyset$.
\item for $i=k$ and $1\leq j\leq k+1$, we fix two facets of $\Delta$, $M_r$ and $M_s$, corresponding to the monomials $g_r$ and $g_s$, and such that $M_r\cap M_s\neq\emptyset$. Let $M_{rs}=M_r\cap M_s$; we denote the monomial corresponding to it  by $g_{rs}$. We consider $\tilde{M}_r=M_r\textbackslash M_{rs}$ and $\tilde{M}_s=M_s\textbackslash M_{rs}$. Let $\tilde{g}_r$ and $\tilde{g}_s$ be the monomials corresponding to $\tilde{M}_r$ and $\tilde{M}_s$. We note that $\tilde{M}_r\cap \tilde{M}_s=\emptyset$. Hence the binomials, $X_r^k\tilde{G}_r-X_s^k\tilde{G}_s$, are in $I_{(k,j)}$, where $\tilde{G}_r$ and $\tilde{G}_s$ are the dual differential operators of $\tilde{g}_r$ and $\tilde{g}_s$ respectively.\\ Let us consider the following exact sequence: \begin{equation*}\label{eq:3}
 \begin{CD}
  0@>>>  I_{(k,j)}@>>>  Q_{(k,j)} @>>> A_{(0,k+1-j)} @>>> 0,
 \end{CD}\end{equation*} we get $\dim I_{(k,j)}=\dim Q_{(k,j)}-f_{k+1-j}.$ Let $\tilde{Q}_{(k,j)}$ be the $\K-$space spanned by all the monomials $X^k_r\tilde{G}_r$, where $\tilde{G}_r$ is the dual differential operator of $g_r$ that is a monomial in the variables $u_1,\ldots,u_m$, corresponding to a subface of $M_r$. Let $\overline{I}_{(k,j)}\subset I_{(k,j)}$ be the $\K-$vector space spanned by the monomials $X^k_rP_r$, where $P_r$ is the dual differential operator of the monomial, in the variables $u_1,\ldots,u_m$, $p_r$, not corresponding to a subface of $M_r$. They are two $\K-$vector spaces s.t. $Q_{(k,j)}=\tilde{Q}_{(k,j)}\oplus\overline{I}_{(k,j)}.$ We consider the ideal $\tilde{I}_{(k,j)}\subset\tilde{Q}_{(k,j)}$. The exact sequence given by evaluation restricted to $\tilde{Q}_{(k,j)}$ becomes:
\begin{equation*}
 \begin{CD}
  0@>>> \tilde{I}_{(k,j)}@>>>  \tilde{Q}_{(k,j)} @>>> A_{(0,k+1-j)} @>>> 0.
 \end{CD}\end{equation*} We note: \begin{multline*}
\dim I_{(k,j)}=\dim Q_{(k,j)}-f_{k+1-j}=\dim\tilde{Q}_{(k,j)}+\dim\overline{I}_{(k,j)}-f_{k+1-j}=\\=\dim\tilde{I}_{(k,j)}+f_{k+1-j}+\dim\overline{I}_{(k,j)}-f_{k+1-j}=\dim\tilde{I}_{(k,j)}+\dim\overline{I}_{(k,j)}.\end{multline*} Hence $I_{(k,j)}=\tilde{I}_{(k,j)}\oplus\overline{I}_{(k,j)}$. The generators of $\tilde{I}_{(k,j)}$ are the binomial $X_r^k\tilde{G}_r-X_s^k\tilde{G}_s$ precisely. The result follows.


\end{itemize}
Moreover for $i=k+1$ and $j=0$, it is clear that $I_{(k+1,0)}=(X_0,\ldots, X_n)^{k+1}$. In fact $X_i^{k+1}(f)=0$, for $i=0,\ldots,n$, since the monomials in $x_0,\ldots,x_n$ of $f$ have degree $k$, by Remark \ref{rmk1.8}.
\end{enumerate}
\end{proof}
We discuss the following example:

\begin{exm}\label{ex2} \rm
Let $V=\{u_1,\ldots,u_6\}$ be a finite set. We have: $$2^V=\{\emptyset,\{u_1\},\ldots,\{u_6\},\ldots, \{u_1,\ldots,u_6\}\}$$
Let $\Delta$ be the following simplicial complex: $$\Delta=\{\emptyset,\underbrace{\{u_1\},\ldots,\{u_6\}}_{\text{ vertices }},\underbrace{\{u_1,u_2\},\ldots,\{u_5,u_6\}}_{\text{ edges }},\underbrace{\{u_1,u_2,u_3\},\ldots,\{u_2,u_3,u_6\}}_{\text{ $2-$faces }}\}$$ It is given by two pyramids, with the common basis, of vertices $u_1,\dots u_5$ and $u_6$ and faces labeled by $g_0,\dots,g_6$ and $g_7$:
\begin{center}
\begin{tikzpicture}[scale=0.50]
\path (0,0) edge[dashed] (2,2);
\path (2,2) edge[dashed] (6,2);
\draw (0,0)--(4,0)--(6,2);
\draw (4,0)--(3,-4)--(6,2);
\draw (4,0)--(3,5)--(6,2);
\path (0,0) edge (3,5);
\path (0,0) edge (3,-4);
\path (2,2) edge[dashed] (3,-4);
\path (2,2) edge[dashed] (3,5);
\fill (0,0) circle(3pt);
\fill (2,2) circle(3pt);
\fill (4,0) circle(3pt);
\fill (6,2) circle(3pt); 
\fill (3,5) circle(3pt);
\fill (3,-4) circle(3pt);
\node at (3.2,5.2) {$u_1$};
\node at (6.2,2.2){$u_2$};
\node at (4.2,-0.2){$u_3$};
\node at (1.7,2.2){$u_4$};
\node at (-0.2,-0.2){$u_5$};
\node at (3.2,-4.2){$u_6$};
\node at (4.5,2.5){$g_0$};
\node at (5,4.8){$g_1$};
\node at (0,3){$g_2$};
\node at (3,2.5) {$g_3$};
\node at (4.2,-1) {$g_4$};
\node at (7.2,0) {$g_5$};
\node at (0.5,-3){$g_6$};
\node at (3,-2){$g_7$};
\path (4.8,4.7) edge[->,bend right] (4,3.5);
\path (0.2,3) edge[->, bend left] (1,2.5);
\path (7,0) edge[->,bend left] (5.2,0);
\path (0.7,-3) edge[->,bend right] (1.3,-2);
\end{tikzpicture}
\end{center}
The $2-$faces are the facets of $\Delta$, then $\Delta$ is pure  of dimension $2$.\\
Let \begin{multline*}
f=f_\Delta=x_0^2u_1u_2u_3+x_1^2u_1u_2u_4+x_2^2u_1u_4u_5+x^2_3u_1u_3u_5+\\
+x^2_4u_2u_3u_6+x^2_5u_2u_4u_6+x^2_6u_4u_5u_6+x^2_7u_3u_5u_6\end{multline*} be the bihomogeneous polynomial of degree $5$. It is a Nagata polynomial of order $2$ and the monomials $g_0=u_1u_2u_3$, $g_1=u_1u_2u_4$, $g_2=u_1u_4u_5$, $g_3=u_1u_3u_5$, $g_4=u_2u_3u_6$, $g_5=u_2u_4u_6$, $g_6=u_4u_5u_6$ and $g_7=u_3u_5u_6$ are of square free type.

We have $$A=A_0\oplus A_1 \oplus A_2 \oplus A_3 \oplus A_4\oplus A_5$$ and the Hilbert vector is given by: $$h_0=1=h_5 \mbox { and } h_1=14=h_4.$$ We calculate $h_2=\dim A_2$ and $h_3=\dim A_3$.\\ By Theorem \ref{teoremaimportante}, we have 
\begin{equation*}
\begin{split}
h_2&=\dim A_2=\dim A_{(2,0)}+\dim A_{(1,1)}+\dim A_{(0,2)}=\\
&=f_3+8\cdot \overline{f_1}+f_2=8+8\cdot 3+12=44
\end{split}
\end{equation*}
\begin{equation*}
\begin{split}
h_3&=\dim A_3=\dim A_{(3,0)}+\dim A_{(2,1)}+\dim A_{(1,2)}+\dim A_{(0,3)}=\\
&=0+f_2+ 8\cdot \overline{f_2} + f_3=12+8\cdot 3+8=44
\end{split}
\end{equation*}
Hence the Hilbert vector is $(1,14,44,44,14,1).$\\

By Theorem \ref{teoremaimportante}, $I=\ann(f)$ is generated by:
\begin{itemize}
\item $\langle X_0,\ldots,X_7\rangle^3$ and $U_1^2, \ldots, U_m^2$, by the part (2a);
\item since the complement of $\Delta$ is:
\begin{multline*}\Delta^c=\{\underbrace{\left\{u_1,u_6\right\},\ldots,\left\{u_2,u_5\right\}}_{\text{diagonals}},\underbrace{\left\{u_1,u_2,u_6\right\},\ldots,\left\{u_1,u_5,u_6\right\}}_{\text{$2-$faces}},\\ \underbrace{\left\{u_1,u_2,u_3,u_5\right\},\ldots,\left\{u_2,u_3,u_5,u_6\right\}}_{\text{$3-$faces}},\ldots,\left\{u_1,\ldots,u_6\right\}\}\end{multline*}
and since diagonals are the minimal faces of $\Delta^c$, then the monomials $U_1U_6$, $U_3U_4$ and $U_2U_5$ are in $I=\ann(f)$, by the part (2b). 
\item For $i=1,2$, fix the facet $M_0=\{u_1,u_2,u_3\}\in\Delta$, corresponding to the monomial $g_0$, we have that the monomial $p_0$ represents:
\begin{itemize}
\item one of the remaining vertices, for example $u_4$:
\begin{center}
\begin{tikzpicture}[scale=0.50]
\path (0,0) edge[dashed] (2,2);
\path (2,2) edge[dashed] (6,2);
\draw[dashed] (0,0)--(4,0);
\draw (4,0) edge (6,2);
\draw[dashed] (4,0)--(3,-4)--(6,2);
\draw (4,0)--(3,5)--(6,2);
\path (0,0) edge[dashed] (3,5);
\path (0,0) edge[dashed] (3,-4);
\path (2,2) edge[dashed] (3,-4);
\path (2,2) edge[dashed] (3,5);
\fill (0,0) circle(3pt);
\fill[red] (2,2) circle(3pt);
\fill (4,0) circle(3pt);
\fill (6,2) circle(3pt); 
\fill (3,5) circle(3pt);
\fill (3,-4) circle(3pt);
\node at (3.2,5.2) {$u_1$};
\node at (6.2,2.2){$u_2$};
\node at (4.2,-0.2){$u_3$};
\node at (1.7,2.2){$u_4$};
\node at (-0.2,-0.2){$u_5$};
\node at (3.2,-4.2){$u_6$};
\node at (4.5,2.5){$g_0$};
\node at (5,4.8){$g_1$};
\node at (0,3){$g_2$};
\node at (3,2.5) {$g_3$};
\node at (4.2,-1) {$g_4$};
\node at (7.2,0) {$g_5$};
\node at (0.5,-3){$g_6$};
\node at (3,-2){$g_7$};
\path (4.8,4.7) edge[->,bend right] (4,3.5);
\path (0.2,3) edge[->, bend left] (1,2.5);
\path (7,0) edge[->,bend left] (5.2,0);
\path (0.7,-3) edge[->,bend right] (1.3,-2);
\end{tikzpicture}
\end{center}
and finally we get: $P_0=p_0(U_1,\ldots,U_8)=U_4$.
Thence the monomial of degree $i+1$, $X_0^{i}U_4$ is in $I=\ann(f)$. The other monomials of this type are obtained with the same procedure.
\item one of the remaining edges, for example the edge that joins the vertices $u_5$ and $u_6$:
\begin{center}
\begin{tikzpicture}[scale=0.50]
\path (0,0) edge[dashed] (2,2);
\path (2,2) edge[dashed] (6,2);
\draw[dashed] (0,0)--(4,0);
\draw (4,0) edge (6,2);
\draw[dashed] (4,0)--(3,-4)--(6,2);
\draw (4,0)--(3,5)--(6,2);
\path (0,0) edge[dashed] (3,5);
\path (0,0) edge[red] (3,-4);
\path (2,2) edge[dashed] (3,-4);
\path (2,2) edge[dashed] (3,5);
\fill (0,0) circle(3pt);
\fill (2,2) circle(3pt);
\fill (4,0) circle(3pt);
\fill (6,2) circle(3pt); 
\fill (3,5) circle(3pt);
\fill (3,-4) circle(3pt);
\node at (3.2,5.2) {$u_1$};
\node at (6.2,2.2){$u_2$};
\node at (4.2,-0.2){$u_3$};
\node at (1.7,2.2){$u_4$};
\node at (-0.2,-0.2){$u_5$};
\node at (3.2,-4.2){$u_6$};
\node at (4.5,2.5){$g_0$};
\node at (5,4.8){$g_1$};
\node at (0,3){$g_2$};
\node at (3,2.5) {$g_3$};
\node at (4.2,-1) {$g_4$};
\node at (7.2,0) {$g_5$};
\node at (0.5,-3){$g_6$};
\node at (3,-2){$g_7$};
\path (4.8,4.7) edge[->,bend right] (4,3.5);
\path (0.2,3) edge[->, bend left] (1,2.5);
\path (7,0) edge[->,bend left] (5.2,0);
\path (0.7,-3) edge[->,bend right] (1.3,-2);
\end{tikzpicture}
\end{center}
We get: $$P_0=p_0(U_1,\ldots,U_8)=U_5U_6$$ and the monomial of degree $i+2$, $X_0^iU_5U_6$, is in $I=\ann(f)$, by part (2c). The other monomials of this type are obtained by the same way.
\end{itemize}
\item The faces $g_0$ and $g_3$ have the common edge that joins the vertices $u_1u_3:$
\begin{center}
\begin{tikzpicture}[scale=0.50]
\path (0,0) edge[dashed] (2,2);
\path (2,2) edge[dashed] (6,2);
\draw (0,0)--(4,0)--(6,2);
\draw (4,0)--(3,-4)--(6,2);
\draw (4,0)--(3,5)--(6,2);
\path (0,0) edge (3,5);
\path (0,0) edge (3,-4);
\path (2,2) edge[dashed] (3,-4);
\path (2,2) edge[dashed] (3,5);
\draw [fill=black!20] (0,0)--(4,0)--(3,5);
\draw [fill=black!20] (3,5)--(4,0)--(6,2);
\fill (0,0) circle(3pt);
\fill (4,0) circle(3pt);
\fill (6,2) circle(3pt); 
\fill (3,5) circle(3pt);
\fill (3,-4) circle(3pt);
\node at (3.2,5.2) {$u_1$};
\node at (6.2,2.2){$u_2$};
\node at (4.2,-0.2){$u_3$};
\node at (-0.2,-0.2){$u_5$};
\node at (3.2,-4.2){$u_6$};
\node at (4.5,2.5){$g_0$};
\node at (5,4.8){$g_1$};
\node at (0,3){$g_2$};
\node at (3,2.5) {$g_3$};
\node at (4.2,-1) {$g_4$};
\node at (7.2,0) {$g_5$};
\node at (0.5,-3){$g_6$};
\node at (3,-2){$g_7$};
\path (4.8,4.7) edge[->,bend right] (4,3.5);
\path (0.2,3) edge[->, bend left] (1,2.5);
\path (7,0) edge[->,bend left] (5.2,0);
\path (0.7,-3) edge[->,bend right] (1.3,-2);
\path (3,5) edge[red] (4,0);
\end{tikzpicture}
\end{center}
$g_{1,3}$ represents the edge that joins the vertices $u_1$ and $u_3$. $\tilde{g_0}$ and $\tilde{g_3}$ represent the vertices $u_2$ and $u_5$ respectively. We have: $$\tilde{G_0}=\tilde{g_0}(U_1,\ldots,U_6)=U_2 \mbox{ and } \tilde{G_3}=\tilde{g_3}(U_1,\ldots,U_6)=U_5.$$ The binomial, of degree $3$, $X_0^2U_2-X_3^2U_5,$ is in $I=\ann(f)$ by the part (2d). The other binomials of degree $3$ of this type are obtained by the same procedure.  
We note that the faces $g_0$ and $g_2$ have the common vertex $u_1$, hence, in the ideal $I=\ann (f)$ there is the binomial, of degree $4$, $X_0^2U_2U_3-X_2^2U_4U_5;$ the other binomials, of degree $4$, are obtained by the same procedure.
\end{itemize}
\end{exm}
{\bf Acknowledgments}.

The second author would thank M. Boij, J. Migliore, U. Nagel, A. Seceleanu, H. Schenck and J. Watanabe for the very stimulating discussions about the subject in Banff. I would like to thank 
also all the organizers and the participants of workshop {\it Lefschetz Properties and Artinian Algebras} at BIRS, Banff, Canada in Mach, 2016. It was a very pleasant time to do Mathematics and enjoy a very nice place. \\
The second author thanks Carolina Araujo for useful discussions about the geometry of Nagata hypersurfaces, comparing it with the classical case. \\
The authors want to thank P. de Poi for his import remarks and comments on a previous version of the manuscript.\\

The second author was partially supported  by ICTP-INdAM Research in Pairs Fellowship 2018/2019 and by FACEPE ATP - 0005-1.01/18.

\end{document}